\begin{document}

\newcommand{\comment}[1]{}    
\newcommand{\hs}{\enspace}
\newcommand{\hhs}{\thinspace}
\newcommand{\real}{\ifmmode {\rm R} \else ${\rm R}$ \fi}
\def\nat{\mbox{\vrule height 7pt width .7pt depth 0pt\hskip -.5pt\bf N}}
\newcommand{\qed}{\hfill{\setlength{\fboxsep}{0pt}
                  \framebox[7pt]{\rule{0pt}{7pt}}} \newline}

\newcommand{\eqed}{\qquad{\setlength{\fboxsep}{0pt}
                  \framebox[7pt]{\rule{0pt}{7pt}}}\newline }
\newtheorem{theorem}{Theorem}
\newtheorem{lemma}[theorem]{Lemma}         
\newtheorem{corollary}[theorem]{Corollary}
\newtheorem{definition}[theorem]{Definition}
\newtheorem{claim}[theorem]{Claim}%
\newtheorem{conjecture}[theorem]{Conjecture}
\newtheorem{proposition}[theorem]{Proposition}
\newtheorem{construction}[theorem]{Construction}
\newtheorem{problem}[theorem]{Problem}

\newcommand{\proof }{{\bf Proof: }}          



\newcommand{\seq}{d_1, \ldots, d_{2k+2}}
\def\eps{\varepsilon}
\newcommand{\eee}{{\mathbb E}}
\newcommand{\seqn}{d_1, \ldots, d_n}
\newcommand{\seqg}{d_1\geq \cdots  \geq d_n}
\newcommand{\edge}{\leftrightarrow}
\newcommand{\nedge}{\nleftrightarrow}
\newcommand{\pakg}{potentially $A_k$-graphical}
\newcommand{\ppkg}{potentially $P_k$-graphical}
\newcommand{\leqk}{\leq _k}
\newcommand\gbin[2]{\genfrac{[}{]}{0pt}{}{#1}{#2}}
\def\n{\notag}
\def\noedge{\not\leftrightarrow}
\def\f{f_d^k(G)}
\def\fn{f_d^k(K_n)}
\def\fab{f_d^k(K_{a,b})}
\def\to{\rightarrow}
\def\lf{\lfloor}
\def\rf{\rfloor}
\def\lc{\lceil}
\def\rc{\rceil}
\def\Dt{\Delta}
\def\ds{\displaystyle}
\def\tr{\tilde r}
\def\field{\mathbb{F}_q}
\def\F{\mathbb{F}}
\def\O{\mathcal{O}}
\def\P{\mathcal{P}}
\def\PP{\mathcal{P}}
\def\GG{\mathcal{G}}
\def\AA{\mathcal{A}}
\def\BB{\mathcal{B}}
\def\HH{\mathcal{H}}
\def\CC{\mathcal{C}}
\def\Q{\mathcal{Q}}
\def\QQ{\mathcal{Q}}
\def\e{\varepsilon}
\def\cp{\mbox{cp}}

\title{The de Bruijn-Erd\H os Theorem for \\ Hypergraphs}
\author{
Noga Alon
\thanks{Schools of Mathematics and Computer Science,
Sackler Faculty of Exact Sciences, Tel Aviv University,
Tel Aviv 69978, Israel. Email: {\tt nogaa@tau.ac.il}
Research supported in part by an ERC Advanced
grant and
by a USA-Israeli BSF grant} \quad
Keith E. Mellinger \thanks{Department of Mathematics, University of
Mary Washington, Fredericksburg, VA 22401.
Email: {\tt kmelling@umw.edu}} \quad Dhruv Mubayi
\thanks{ Department of Mathematics, Statistics, and Computer
Science, University of Illinois,  Chicago, IL 60607. Research
supported in part by NSF grants DMS-0653946 and DMS-0969092. Email: {\tt
mubayi@math.uic.edu}} \quad Jacques Verstra\"ete \thanks{Department
of Mathematics, University of California, San Diego, La Jolla CA
92093-0112. Email: {\tt jacques@ucsd.edu} Research supported in part by
NSF grant DMS-0800704 and a Hellman Fellowship}\\ }

\date{\today}
\maketitle

\vspace{-0.3in}

\begin{abstract}

Fix integers $n \ge r \ge 2$. A clique partition of ${[n] \choose r}$
is a collection of proper subsets $A_1, A_2, \ldots, A_t \subset [n]$
such that $\bigcup_i{A_i \choose r}$ is a partition of ${[n] \choose
r}$.

 Let $\cp(n,r)$ denote the minimum size of a clique partition of
 ${[n] \choose r}$.
 A classical theorem of de Bruijn and Erd\H os states
that $\cp(n, 2) = n$.
In this paper we study $\cp(n,r)$, and show in
general that for each fixed $r \geq 3$,
\[ \cp(n,r) \geq (1 + o(1))n^{r/2} \quad \quad \mbox{ as }n
 \rightarrow \infty.\]
We conjecture $\cp(n,r) = (1 + o(1))n^{r/2}$.
This conjecture has already been verified (in a very strong sense) for $r = 3$
by Hartman-Mullin-Stinson. We give further evidence of this conjecture by constructing, for each $r
\ge 4$, a family of $(1+o(1))n^{r/2}$ subsets of $[n]$ with the following property:
no two $r$-sets of $[n]$ are covered more than once and all but $o(n^r)$ of the $r$-sets of $[n]$ are covered.

We  also give an absolute lower
bound $\cp(n,r) \geq {n \choose r}/{q + r - 1 \choose r}$
when $n = q^2 + q + r - 1$, and for each $r$ characterize the finitely many configurations achieving equality with the lower bound.
Finally we note the connection
of $\cp(n,r)$ to extremal graph theory, and determine some new asymptotically sharp bounds for the Zarankiewicz problem.
\end{abstract}

\section{Introduction}

A classical theorem of de Bruijn and Erd\H os~\cite{EdB} states that
the minimum number of proper complete subgraphs (henceforth cliques) of
the complete graph $K_n$ that are needed to partition its edge set is $n$.
Equality holds only for $n-1$ copies of $K_2$ on a vertex together with
a clique of order
$n-1$ (a near-pencil) or the cliques whose vertex sets are the $q^2 + q + 1$ point sets of lines in a projective plane of order
$q$ when $n=q^2+q+1$ and a projective plane exists.

\bigskip

In this paper, we  study the  analog of the de Bruijn-Erd\H os theorem for hypergraphs.  Write ${X \choose r}$ for the collection of $r$-elements subsets of a set $X$; this is a clique.
Throughout this paper, we write $[n] := \{1,2,\dots,n\}$. We will associate a hypergraph with its edge set.

\bigskip

{\bf Definition.}
Fix integers $n \ge r \ge 2$. A {\em clique partition} of a hypergraph $H$ is a collection of proper subcliques that partition $H$. Let
{\rm \cp}$(n, r)$ be the minimum size of a clique partition of ${[n] \choose r}$. In other words, {\rm \cp}$(n,r)$ is the minimum $t$ for which there are proper subsets $A_1, A_2, \ldots, A_t \subset [n]$ such that $\bigcup_i{A_i \choose r}$ is a partition of ${[n] \choose r}$.

\bigskip

The de Bruijn-Erd\H os Theorem can now be restated as $\cp(n,2)=n$ together with the characterization of equality. In this paper,
we consider $\cp(n,r)$ for $r>2$.
  A minimum clique partition of ${[n] \choose r}$ shall be referred to as an {\em optimal clique partition}.

As noted above, there are essentially two types of configurations achieving equality in the de Bruijn-Erd\H os theorem: near pencils and projective planes. For $r=3$
and $n=q^2+1$ there is only one
type of configuration that achieves equality, and this is called an inversive plane.

 An {\em inversive plane} is a pair $(V,{\cal C})$
where $V$ is a set of points and ${\cal C}$ is a set of subsets of points called circles satisfying the following axioms:

\medskip

\begin{center}
\begin{tabular}{lp{5.5in}}
(1) & any three points of $V$ must lie in exactly one circle in $C \in {\cal C}$, \\
(2) & four points in $V$ must exist that are not contained in a common circle and \\
(3) & for any circle $C \in {\cal C}$ and points $p \in C$ and $q \in V \backslash C$, there is exactly one circle $D \in {\cal C}$ such that $p,q \in
D$ and $C \cap D = \{p\}$.
\end{tabular}
\end{center}

\medskip

It is known that if an inversive plane on $n$ points exists, then $n$ is necessarily of the form $q^2 + 1$ for some integer $q \geq 2$, and the number $q$ is called
the {\em order} of the inversive plane.

If the cliques in a clique partition form an inversive plane, then we identify the partition with the inversive plane. The problem of determining $\cp(n,3)$ is quite well understood due to the following theorem.

\begin{theorem} \label{hms} {\bf (Hartman-Mullin-Stinson \cite{HMS})}
 Let $q \geq 3$ be an integer and $n = q^2 + 1$. Then
 $$\mbox{\rm \cp}(n,3)\ge n\sqrt{n-1}.$$ If $q$ is a prime power, then

  $\bullet$ {\rm \cp}$(n,3)=qn=n\sqrt{n-1}$, and

  $\bullet$ if $P$ is a clique partition of ${[n] \choose 3}$, $|P|=qn$, then $P$ is an inversive plane of order $q$.

  Consequently, as $n \rightarrow \infty$
$$\hbox{\rm \cp}(n,3) \sim n^{3/2}.$$
  \end{theorem}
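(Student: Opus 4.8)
The plan is to treat a clique partition as a family of \emph{blocks} $A_1,\dots,A_t\subset[n]$ with $a_i=|A_i|\ge 3$, and to extract two elementary facts from the partition property. Since every triple lies in exactly one block,
\[ \sum_{i=1}^t \binom{a_i}{3}=\binom{n}{3}, \]
and no two blocks share three points, for otherwise a triple would be covered twice. Secondly, fixing a point $x$ and deleting it, the sets $A_i\setminus\{x\}$ taken over the blocks $A_i\ni x$ partition the edge set of the complete graph on $[n]\setminus\{x\}$; by the de Bruijn--Erd\H os theorem $\cp(n-1,2)=n-1$ this forces at least $n-1$ blocks through each point, so writing $r_x=|\{i:x\in A_i\}|$ we get $\sum_i a_i=\sum_{x\in[n]}r_x\ge n(n-1)$. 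Feeding these two relations into the power-mean inequality $\big(\sum_i a_i\big)^3\le t^2\sum_i a_i^3$, together with $\sum_i a_i^3=\sum_i a_i(a_i-1)(a_i-2)+O\!\big(\sum_i a_i^2\big)=n(n-1)(n-2)+o(n^3)$, yields after a routine Cauchy--Schwarz estimate $t\ge(1-o(1))n\sqrt{n-1}$. Since $n\sqrt{n-1}\sim n^{3/2}$, this already gives the asymptotic assertion once a matching construction is in hand.

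The hard part is upgrading $(1-o(1))n\sqrt{n-1}$ to the exact bound $n\sqrt{n-1}$ together with the rigidity statement, and here the soft argument above is genuinely lossy: the link bound $n-1$ is never attained, because the true link of a point in the extremal example is an affine plane of order $q$ with $q^2+q$ lines, not $q^2$. To recover the exact constant I would bring in the constraint that \emph{any two blocks meet in at most two points} and analyse the second moment $\sum_x\binom{r_x}{2}=\sum_{i<j}|A_i\cap A_j|$ in tandem with the relation $\sum_{i:\,\{x,y\}\subseteq A_i}(a_i-2)=n-2$, which holds for every pair $\{x,y\}$. The aim is to show that at the optimum $t=n\sqrt{n-1}$ all the inequalities collapse to equalities, forcing every block to have the common size $q+1$ and every triple to be covered exactly once; equivalently, the optimal partition is a Steiner system $S(3,q+1,q^2+1)$. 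Such a system is by definition an inversive plane of order $q$ (its derived design at each point is an affine plane $\mathrm{AG}(2,q)$, matching the failure of the crude link bound). Establishing this chain of rigidity implications is where the real work lies, and it is the step I expect to be the main obstacle.

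For the upper bound when $q$ is a prime power I would exhibit the Miquelian inversive plane directly: take an elliptic quadric (ovoid) $\mathcal{O}$ in $\mathrm{PG}(3,q)$, which has exactly $q^2+1=n$ points with no three collinear, and let the circles be the intersections of $\mathcal{O}$ with the planes meeting it in more than one point, each such plane cutting a conic of $q+1$ points. Any three points of $\mathcal{O}$ span a unique plane and hence lie in a unique circle, so the circles form an $S(3,q+1,n)$; counting triples gives exactly $\binom{n}{3}/\binom{q+1}{3}=q(q^2+1)=qn$ circles, whence $\cp(n,3)\le qn=n\sqrt{n-1}$ and the two bounds meet. Finally, the asymptotic assertion $\cp(n,3)\sim n^{3/2}$ follows by combining the lower bound $\cp(n,3)\ge(1-o(1))n^{3/2}$ with the construction above at prime-power $q$, using the density of prime powers to pass from $n=q^2+1$ to nearby values of $n$ via the paper's general $r=3$ constructions.
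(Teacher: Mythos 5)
Your proposal, as written, does not prove the heart of the theorem. Note first that the paper itself gives no proof of this statement: it is quoted from Hartman--Mullin--Stinson \cite{HMS}, and the paper only supplies the matching ingredients elsewhere (the inversive-plane construction of Section 2 for prime powers $q \equiv 3 \pmod 4$, and the asymptotic lower bound of Theorem \ref{lowerbound}). Measured against what the theorem asserts, the decisive gap in your write-up is that the exact inequality $\cp(n,3) \ge n\sqrt{n-1}$ and the rigidity statement (every partition of size $qn$ is an inversive plane) are precisely the parts you defer: you outline a plan (second moment $\sum_x \binom{r_x}{2}=\sum_{i<j}|A_i\cap A_j|$, the pair identity $\sum_{i:\{x,y\}\subseteq A_i}(a_i-2)=n-2$, collapsing all inequalities at the optimum) and then state that establishing this chain is where the real work lies. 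That chain \emph{is} the theorem; what you have actually proved is only the asymptotic statement plus the upper bound. (Your final reduction is fine: once all blocks are forced to have size $q+1$, an $S(3,q+1,q^2+1)$ is indeed an inversive plane --- the tangency axiom follows from the parameters by the same counting the paper carries out when verifying axiom 3 in Section 2 --- but forcing equal block sizes at $t=qn$ is exactly the missing content.)

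Second, even your asymptotic lower bound contains a false step. You claim $\sum_i a_i^3=\sum_i a_i(a_i-1)(a_i-2)+O\bigl(\sum_i a_i^2\bigr)=n(n-1)(n-2)+o(n^3)$, but $\sum_i a_i^2=o(n^3)$ fails for general clique partitions: for the partition of $\binom{[n]}{3}$ into all $\binom{n}{3}$ single triples one has $\sum_i a_i^2 = 9\binom{n}{3}=\bigl(\tfrac32+o(1)\bigr)n^3$. The repair is to run convexity on $\binom{x}{3}$ itself rather than on cubes, which is exactly the paper's proof of Theorem \ref{lowerbound}: from $\sum_i \binom{a_i}{3}=\binom{n}{3}$, $\sum_i a_i \ge n(n-1)$ (your link argument, which is correct), and Jensen's inequality applied to the convex increasing function $\binom{x}{3}$, one gets $t\binom{n(n-1)/t}{3}\le \binom{n}{3}$ and hence $t\ge (1-o(1))n^{3/2}$, with no need to control $\sum_i a_i^2$ at all. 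Your upper bound, by contrast, is sound and is a genuinely nicer route than the paper's Section 2 model: the elliptic-quadric (ovoid) construction in $PG(3,q)$ yields an $S(3,q+1,q^2+1)$ with exactly $\binom{n}{3}/\binom{q+1}{3}=qn$ circles for \emph{every} prime power $q$, not just $q\equiv 3 \pmod 4$; it is the construction the paper describes in its concluding remarks, though the classical facts you invoke about ovoids (no three points collinear, plane sections of size $q+1$) would need citation or proof to stand on their own.
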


In Section 2, we shall give a simple description of inversive planes requiring only basic field arithmetic.  There, we provide what might be considered a ``classical model'' for a finite inversive plane, starting with the coordinates for an affine plane over a finite field.  The model is not new (see, for instance the very complete book by Benz \cite{Benz2}, or the more recent survey by Wilker \cite{wilker}).
Although this treatment is not new, it appears that it is not well-known. We include it here for completeness.

In  \cite{HMS},
$\cp(n,3)$ is determined when $n$ is close to $q^2 + 1$ and
$q$ is a prime power, due to the existence of inversive planes and
classification~\cite{To} of linear spaces with $n$ points and $m$ blocks
such that $(m - n)^2 \leq n$. In general, however, it is likely to be
challenging to determine $\cp(n,3)$ exactly for all $n$, in contrast to
the de Bruijn-Erd\H os Theorem.

\subsection{Clique partitions of ${[n] \choose r}$}

The problem of determining $\cp(n,r)$ for $r > 3$ appears to be difficult. First we present a lower bound for $\cp(n,r)$ in terms of $\cp(n-1,r-1)$:

\begin{theorem}\label{lowerbound}
Let $r > 2$ and $n > r$ be integers. If {\rm \cp}$(n-1,r-1) = \delta${\rm \cp}$(n,r)$ then
\begin{equation}\label{bound}
\mbox{\rm \cp}(n,r) {\delta n \choose r} \leq {n \choose r}.
 \end{equation}
 Consequently, for each fixed $r \ge 2$, and $n \rightarrow \infty$,
$$ \hbox{ \rm\cp}(n,r) \geq (1 - o(1))n^{r/2}.$$
 \end{theorem}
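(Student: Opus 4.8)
The plan is to run a vertex-by-vertex ``link'' argument, turning a clique partition of $\binom{[n]}{r}$ into clique partitions of $\binom{[n-1]}{r-1}$, and then to combine the resulting degree bound with convexity of the binomial coefficient. First I would fix an optimal clique partition $A_1,\dots,A_t\subset[n]$ with $t=\cp(n,r)$, and assume $|A_i|\ge r$ throughout (a clique of size $<r$ covers no $r$-set, so it can be discarded without increasing the count). For a vertex $v$, I delete $v$ from every clique containing it and claim that $\{A_i\setminus\{v\}:v\in A_i\}$ is a clique partition of $\binom{[n]\setminus\{v\}}{r-1}$. Indeed, an $(r-1)$-set $T\subseteq[n]\setminus\{v\}$ corresponds to the $r$-set $T\cup\{v\}$, which lies in exactly one $A_i$; that $A_i$ then contains $v$ and satisfies $T\subseteq A_i\setminus\{v\}$, and conversely, so the sets $\binom{A_i\setminus\{v\}}{r-1}$ partition $\binom{[n]\setminus\{v\}}{r-1}$. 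Each $A_i\setminus\{v\}$ is a proper subset of the $(n-1)$-set $[n]\setminus\{v\}$ because $A_i$ is proper in $[n]$. Hence the number $d_v$ of cliques through $v$ satisfies $d_v\ge\cp(n-1,r-1)$.

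Next I would sum and apply convexity. Double counting incidences gives $\sum_i|A_i|=\sum_v d_v\ge n\,\cp(n-1,r-1)=\delta n\,\cp(n,r)$, so the average clique size $\bar a:=t^{-1}\sum_i|A_i|$ is at least $\delta n$. Since the partition is exact, $\sum_i\binom{|A_i|}{r}=\binom{n}{r}$. The polynomial $\binom{x}{r}$ is increasing and convex on $[r-1,\infty)$, and all $|A_i|\ge r$, so Jensen's inequality gives $\binom{n}{r}=\sum_i\binom{|A_i|}{r}\ge t\,\binom{\bar a}{r}\ge\cp(n,r)\binom{\delta n}{r}$, which is exactly (\ref{bound}).

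For the asymptotic consequence I would induct on $r$, the base case $r=2$ being $\cp(n,2)=n=n^{2/2}$ from de Bruijn--Erd\H os. In the inductive step write $c=\cp(n,r)$ and $c'=\cp(n-1,r-1)$, so $\delta=c'/c$ and (\ref{bound}) reads $c\binom{(c'/c)n}{r}\le\binom{n}{r}$. Using $\binom{m}{r}\ge(m-r)^r/r!$ and $\binom{n}{r}\le n^r/r!$ and rearranging, this becomes $c^{r-1}\ge(1-o(1))(c')^r$, i.e. $c\ge(1-o(1))(c')^{r/(r-1)}$, valid as long as $\delta n=(c'/c)n\to\infty$. Substituting the inductive hypothesis $c'\ge(1-o(1))(n-1)^{(r-1)/2}=(1-o(1))\,n^{(r-1)/2}$ then yields $c\ge(1-o(1))\,n^{r/2}$, closing the induction.

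The one genuinely delicate point is the passage from the implicit inequality (\ref{bound})---in which $\delta$ itself involves the unknown $\cp(n,r)$---to an explicit lower bound, together with controlling the error in the approximation $\binom{\delta n}{r}\sim(\delta n)^r/r!$, which is legitimate only when $\delta n\to\infty$. I would dispose of this with a dichotomy: if $\cp(n,r)\ge n^{r/2}$ the desired bound holds outright, whereas if $\cp(n,r)<n^{r/2}$ then $\delta n=c'n/c>(1-o(1))\,n^{1/2}\to\infty$ by the inductive lower bound on $c'$, so the approximation is self-justifying and the rearrangement above applies. Everything else---the link bijection and the convexity estimate---is routine.
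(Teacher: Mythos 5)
Your proof is correct and takes essentially the same route as the paper: the vertex-link construction $P_v^*=\{C\setminus\{v\}:v\in C\in P\}$ giving $\sum_i |A_i|\ge n\,\cp(n-1,r-1)$, combined with convexity of $\binom{x}{r}$ applied to $\sum_i\binom{|A_i|}{r}=\binom{n}{r}$. You are in fact slightly more thorough than the paper, which derives only inequality (\ref{bound}) and states the asymptotic consequence without proof; your induction on $r$ with the dichotomy guaranteeing $\delta n\to\infty$ supplies exactly the details the paper leaves implicit.
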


Via constructions we shall give evidence that this lower bound
may be the true asymptotic behavior of $\cp(n,r)$ for every fixed
$r \geq 2$.

\begin{theorem} \label{t3}
For every fixed $r \geq 2$
there is a family $H \subseteq {[n] \choose r}$ of $r$-element sets
such that $|H| = {n \choose r} - o(n^r)$
and $H$ has a clique partition with $(1 + o(1))n^{r/2}$ cliques.
\end{theorem}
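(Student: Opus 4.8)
The plan is to realize the desired family by an explicit algebraic construction based on low-degree polynomials over a finite field, in the spirit of the projective-plane model that produces equality in the $r=2$ case. Let $q$ be a prime power and take the vertex set to be $V = \F_q \times \F_q$, so that $|V| = q^2$. For each polynomial $f \in \F_q[x]$ of degree at most $r-1$, I would define the clique $A_f = \{(x, f(x)) : x \in \F_q\}$, the graph of $f$; this is a set of $q$ points of $V$, one in each ``column'' $\{x\} \times \F_q$. There are exactly $q^r$ such polynomials (one per choice of the $r$ coefficients), hence $q^r = (q^2)^{r/2} = |V|^{r/2}$ cliques, which matches the target count once I arrange $q^2 \approx n$.

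Two properties need to be checked, both of which I expect to follow from elementary facts about polynomials. First, for distinct $f, g$ of degree at most $r-1$ the difference $f-g$ is a nonzero polynomial of degree at most $r-1$ and so has at most $r-1$ roots; thus $|A_f \cap A_g| \le r-1$, which means no $r$-subset of $V$ lies in two different cliques and the sets $\binom{A_f}{r}$ are pairwise disjoint. Consequently $H := \bigcup_f \binom{A_f}{r}$ is automatically partitioned by the cliques $A_f$, and $|H| = q^r \binom{q}{r}$. Second, an $r$-set $S = \{(x_1,y_1),\dots,(x_r,y_r)\} \subseteq V$ is covered precisely when some $A_f$ contains it, i.e.\ when there is a degree at most $r-1$ polynomial with $f(x_i)=y_i$ for all $i$; by Lagrange interpolation this happens exactly when the $x_i$ are distinct, in which case $f$ is unique. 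Hence the uncovered $r$-sets are exactly those meeting some column in two or more points, and a crude count bounds their number by $q\binom{q}{2}\binom{q^2}{r-2} = O(q^{2r-1}) = o(q^{2r}) = o(n^r)$. This already proves the theorem along the sequence $n = q^2$ with $q$ a prime power.

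The main technical obstacle is extending the result to every $n \to \infty$, since the construction only lives naturally on $q^2$ points. I would handle this by a standard density-of-primes argument: given $n$, choose the largest prime $q$ with $q^2 \le n$. Prime gaps near $\sqrt{n}$ are $o(\sqrt{n})$ (by the prime number theorem, or more elementarily by a Bertrand-type bound), so $q = (1-o(1))\sqrt{n}$ and $q^2 = (1-o(1))n$. I would build the polynomial family on a fixed $q^2$-element subset of $[n]$ and leave the remaining $n - q^2 = o(n)$ points untouched. The number of cliques is then $q^r = (1+o(1))n^{r/2}$ as required, and the $r$-sets that fail to be covered are either those within $V$ meeting a column twice, counted above, or those using at least one of the $o(n)$ leftover points; the latter number at most $\binom{n}{r} - \binom{q^2}{r}$, which is $o(n^r)$ because $q^2 = (1-o(1))n$. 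Adding the two contributions keeps the uncovered total at $o(n^r)$, completing the proof. The one place where genuine care is needed is this final bookkeeping, ensuring that both the wasted points and the within-$V$ defects remain simultaneously negligible compared to $n^r$.
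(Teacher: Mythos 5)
Your proposal is correct and is essentially the paper's own proof: the same construction of graphs $\{(x,f(x)):x\in \F_q\}$ of polynomials of degree at most $r-1$ on the vertex set $\F_q\times\F_q$, with the same root-counting/interpolation argument for disjointness and coverage, and the same $O(q^{2r-1})=o(n^r)$ bound on the $r$-sets meeting a column twice. The only difference is cosmetic—the paper passes to general $n$ by taking the smallest prime-power square $n'\ge n$ and restricting that construction to $n$ of its vertices, whereas you embed the largest prime square $q^2\le n$ into $[n]$ and discard the $o(n)$ leftover points; both work, though note that a Bertrand-type bound alone only gives $q\ge \sqrt{n}/2$, so the prime number theorem (which you also cite) is the fact actually needed for $q=(1-o(1))\sqrt{n}$.
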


It appears to be difficult to extend the constructions for Theorem
\ref{t3} to full partitions of ${[n] \choose r}$ without adding many
more subcliques. We nevertheless conjecture that in general, the above
lower bound is asymptotically sharp:

\begin{conjecture} \label{conj}
For every fixed $r\ge2$ we have {\rm \cp}$(n, r) = (1 + o(1))n^{r/2}$ as $n \rightarrow \infty$.
\end{conjecture}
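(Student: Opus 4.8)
To establish the conjectured identity $\cp(n,r)=(1+o(1))n^{r/2}$ one must prove two matching inequalities. The lower bound $\cp(n,r)\ge(1-o(1))n^{r/2}$ is exactly the conclusion of Theorem~\ref{lowerbound}, so the entire remaining content is the upper bound: a clique partition of ${[n]\choose r}$ into at most $(1+o(1))n^{r/2}$ proper subcliques. The plan is to begin from the near-optimal \emph{packing} supplied by Theorem~\ref{t3}, which partitions a sub-hypergraph $H$ with $|H|={n\choose r}-o(n^r)$ into $(1+o(1))n^{r/2}$ cliques, and then to complete this to a partition of all of ${[n]\choose r}$ by adding only $o(n^{r/2})$ further cliques to cover the leftover $L:={[n]\choose r}\setminus H$.

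The arithmetic of this completion dictates the whole strategy. A clique of size $s$ covers ${s\choose r}\approx s^r/r!$ of the $r$-sets, so to spend only $o(n^{r/2})$ cliques on the $|L|=o(n^r)$ leftover sets the added cliques must, on average, have size $s=\Theta(\sqrt n)$ --- exactly the block size already used in Theorem~\ref{t3}. Since every added clique $A$ must satisfy ${A\choose r}\subseteq L$ (the partition is exact), this forces $L$ to be, up to a negligible remainder, a disjoint union of $\Theta(\sqrt n)$-cliques; in other words the leftover must itself look like a partial Steiner system on a highly structured set of $r$-sets. A generic quasirandom-sparse leftover contains essentially no cliques larger than $r+O(1)$, and partitioning such an $L$ would cost $\Theta(|L|)=o(n^r)$ cliques, catastrophically more than $n^{r/2}$. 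Thus a pure packing or nibble argument cannot suffice: I must arrange the main layer so that whatever it fails to cover is itself decomposable into large cliques.

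This points to the \emph{absorption} method as the natural vehicle. Before building the main layer I would set aside a random reservoir $R\subseteq[n]$ with $|R|=n^{1-\gamma}$ for a small $\gamma>0$, together with a family of flexible cliques inside $R$ (and controlling their interaction with $[n]\setminus R$) whose total number is $o(n^{r/2})$ and which can be reconfigured to absorb any sufficiently small, sufficiently spread-out collection of uncovered $r$-sets into $\Theta(\sqrt n)$-cliques. I would then run an iterative scheme in the spirit of Glock--K\"uhn--Lo--Osthus: repeatedly apply a nibble to cover most of the remaining $r$-sets with $\Theta(\sqrt n)$-cliques, at each stage redirecting the thin uncovered remainder onto the reservoir, until the global leftover is small enough to be finished exactly by the pre-built absorber. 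Crucially, because clique partitions permit blocks of \emph{varying} size, the absorber need not realize an exact design $S(r,\Theta(\sqrt n),n)$; it need only have enough slack in its block sizes to swallow the divisibility obstructions that would otherwise block an exact completion.

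The main obstacle is precisely the construction and analysis of this absorber when the relevant building blocks are cliques of size growing like $\sqrt n$. Standard absorbing gadgets use cliques of \emph{bounded} size and invoke fixed-size decomposition theorems (Keevash; Glock--K\"uhn--Lo--Osthus), whereas here the required components are $\Theta(\sqrt n)$-cliques, a regime in which no exact design-existence theorem is available --- indeed, for $r\ge4$ the existence of Steiner systems $S(r,\Theta(\sqrt n),n)$ is itself open, which is the deeper reason the conjecture has resisted proof. I therefore expect the decisive difficulty to be showing that the unavoidable arithmetic defects can be confined to $o(n^r)$ $r$-sets that are simultaneously concentrated on a structured low-complexity set (such as the $r$-sets meeting $R$) and decomposable into $o(n^{r/2})$ large cliques, the two demands of coverage and of few blocks pulling against each other. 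A complementary line worth pursuing in parallel is an inductive construction driven by the recursion behind Theorem~\ref{lowerbound}, with base case $r=3$ given by Theorem~\ref{hms}: clique-partition the link ${[n]\setminus\{v\}\choose r-1}$ of each vertex using a near-optimal partition for uniformity $r-1$, glue compatible links into global $r$-cliques, and apply the same absorption step to repair the $r$-sets left uncovered by the gluing.
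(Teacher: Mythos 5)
The statement you were given is Conjecture \ref{conj}, which the paper does not prove: it is posed as an open problem precisely because the completion step you describe is unavailable. Your proposal correctly reduces the problem to the upper bound (the lower bound is Theorem \ref{lowerbound}), correctly starts from the near-partition of Theorem \ref{t3}, and your arithmetic observation is genuinely the right one --- completing the packing within a budget of $o(n^{r/2})$ extra cliques forces those cliques to have size close to $\sqrt{n}$, hence forces the leftover $L$ to decompose into such large cliques, which a generic leftover cannot do. This is exactly the obstruction the authors allude to when they write that it ``appears to be difficult to extend the constructions for Theorem \ref{t3} to full partitions of ${[n]\choose r}$ without adding many more subcliques.'' But at that point your argument stops being a proof and becomes a research program. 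The absorption step is never constructed: you describe what an absorber would need to do (reconfigure itself to swallow an arbitrary small leftover using $\Theta(\sqrt{n})$-cliques), and you yourself concede that all known absorbing and iterative-nibble machinery operates with cliques of \emph{bounded} size, resting on design-existence theorems that have no analogue when the block size grows with $n$. For $r \ge 4$ the existence of even a single Steiner system with parameters $(n,k,r)$ and $k \to \infty$, let alone $k = \Theta(\sqrt{n})$, is unknown (the paper notes that no Steiner $(n,m,r)$-system is known for any $n > m > r > 5$), so the object your absorber must realize on the reservoir is of a type not known to exist.

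Concretely, the gap is this: you need a family $\mathcal{A}$ of $o(n^{r/2})$ cliques of size $\Theta(\sqrt{n})$, compatible with the main packing, such that for every admissible leftover $L'$ some reconfiguration of $\mathcal{A}$ exactly partitions $L'$ together with the $r$-sets currently covered by $\mathcal{A}$. No lemma in the paper and no result you cite produces such a family in this block-size regime; the ``slack in block sizes'' you invoke to evade the divisibility obstructions is a heuristic, not an argument, and the same objection applies to the inductive gluing-of-links variant you sketch at the end, whose repair step is the same unconstructed absorber. Since the paper leaves the statement as a conjecture (settled only for $r=3$, via Theorem \ref{hms}), and your decisive step is explicitly deferred rather than executed, the proposal identifies the difficulty accurately but does not constitute a proof.
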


\setcounter{theorem}{3}

\subsection{Characterization of clique partitions}

Equality holds in the de Bruijn-Erd\H os Theorem for projective planes
and near-pencils. The nature of clique partition numbers $\cp(n,r)$
for $r > 2$ surely depends on number theoretic properties of $n$, so
unlike in the de Bruijn-Erd\H os Theorem, a characterization of optimal
partitions for all $n$ is likely to be more difficult.

Towards this goal, we prove the following theorem, which considers
values of $n$ that are parameterized in a way that includes the de
Bruijn-Erd\H os Theorem as a special case.  As is customary, we define
the binomial coefficient ${x \choose r}=x(x-1)\cdots (x-r+1)/r!$ for
any positive integer $r$ and real number $x$.  Also, recall that a {\em
Steiner $(n,k,t)$-system} is a collection of $k$-element sets of $[n]$
such that every $t$-element subset of $[n]$ lies in precisely one of
the $k$-element sets.
\begin{theorem}\label{lowerbound2}
Let $r \geq 2$ and let $n$ be a positive integer. Define the positive real number $q$ by the equation $n = q^2 + q + r - 1$. Then
\begin{equation}\label{bound2}
\mbox{\rm \cp}(n,r) \geq \frac{{n \choose r}}{{q + r - 1 \choose r}}.
\end{equation}
Equality holds if and only if one of the following holds:
\begin{center}
\begin{tabular}{lp{5in}}
$\bullet$ & $n = r + 1$ and the partition is ${[r + 1] \choose r}$ \\
$\bullet$ & $n > r + 1$ and $r = 2$ and the partition is a projective plane of order $q$ or a near pencil  \\
$\bullet$ & the partition is a Steiner $(n,k,r)$-system where $(n,k,r) \in \{(8,4,3),(22,6,3),(23,7,4),(24,8,5)\}$.
\end{tabular}
\end{center}
 \end{theorem}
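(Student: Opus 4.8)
The plan is to induct on $r$, using the de Bruijn--Erd\H os theorem \cite{EdB} as the base case $r=2$ and the ``link'' operation to descend from $r$ to $r-1$. Write $k=q+r-1$ and let $T=\binom{n}{r}/\binom{k}{r}$ denote the claimed bound. For the base case, since $n=q^2+q+1$ one checks $\binom{q+1}{2}=\tfrac12(q^2+q)=\tfrac12(n-1)$, so $T=\binom{n}{2}/\binom{q+1}{2}=n$, and both the inequality and its equality cases (projective planes and near-pencils) are exactly \cite{EdB}. The engine of the induction is the \emph{link lemma} (the same device underlying Theorem~\ref{lowerbound}): if $A_1,\dots,A_t$ is a clique partition of $\binom{[n]}{r}$ and $x\in[n]$, then $\{A_i\setminus\{x\}:x\in A_i\}$ is a clique partition of $\binom{[n]\setminus\{x\}}{r-1}$, because every $(r-1)$-set $S\subseteq[n]\setminus\{x\}$ lies in the unique clique covering $S\cup\{x\}$. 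Writing $d_x$ for the number of cliques through $x$ and $a_i=|A_i|$, this yields $d_x\ge\cp(n-1,r-1)$ for each $x$.

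For the lower bound when $r\ge 3$, observe $n-1=q^2+q+(r-1)-1$, so the same $q$ governs the link, and the induction hypothesis gives $d_x\ge\binom{n-1}{r-1}/\binom{k-1}{r-1}$. Summing over $x$, using $\sum_x d_x=\sum_i a_i$ together with $\binom{n}{r}=\tfrac nr\binom{n-1}{r-1}$ and $\binom{k}{r}=\tfrac kr\binom{k-1}{r-1}$, produces $\sum_i a_i\ge kT$, while the partition property gives exactly $\sum_i\binom{a_i}{r}=\binom{n}{r}=\binom{k}{r}T$. I would finish with a supporting-line argument. The function $\binom{a}{r}$ is convex on $[r-1,\infty)$ (writing $a=r-1+s$ it equals $\tfrac1{r!}\prod_{m=0}^{r-1}(s+m)$, a polynomial with nonnegative coefficients in $s\ge0$), so its tangent line $L(a)=\binom{k}{r}+c(a-k)$ at $a=k$, with $c=\frac{d}{da}\binom{a}{r}\big|_{a=k}>0$, satisfies $\binom{a}{r}\ge L(a)$ with equality iff $a=k$. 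Moreover $\binom{k}{r}-ck<0$, since $ck/\binom{k}{r}=\sum_{j=0}^{r-1}\frac{k}{k-j}>r\ge2>1$. Summing $\binom{a_i}{r}\ge L(a_i)$ and substituting $\sum_i\binom{a_i}{r}=\binom{k}{r}T$ and $\sum_i a_i\ge kT$ gives $t\bigl(\binom{k}{r}-ck\bigr)\le T\bigl(\binom{k}{r}-ck\bigr)$; dividing by the negative quantity reverses the inequality, so $t\ge T$.

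For the equality characterization, $t=T$ forces every inequality above to be tight. Equality in $\binom{a_i}{r}\ge L(a_i)$ forces $a_i=k$ for all $i$, so the partition is a Steiner $(n,k,r)$-system of uniform block size $k=q+r-1$; and $\sum_x d_x=kT=nD$ with each $d_x\ge D:=\binom{n-1}{r-1}/\binom{k-1}{r-1}$ forces $d_x=D$ for every $x$, so each point-link is itself an \emph{extremal} clique partition of $\binom{[n-1]}{r-1}$. Since the link has uniform block size $k-1$, it is a genuine Steiner system; in particular at the bottom level $r-1=2$ it must be a projective plane of order $q$ (a near-pencil, having unequal block sizes, is excluded). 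Unwinding the induction, an extremal partition with $r\ge3$ therefore yields a tower of Steiner systems in which each is the point-derived design of the previous one, descending as $(n,k,r)\to(n-1,k-1,r-1)\to\cdots\to(q^2+q+1,\,q+1,\,2)=\mathrm{PG}(2,q)$.

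The main obstacle is the final enumeration: deciding which such towers actually exist. This reduces to the classical and genuinely hard question of how far a projective plane of order $q$ can be repeatedly extended, and here I would invoke (rather than reprove) the classical theory of extensions of symmetric and Steiner designs: a projective plane of order $q$ extends to a Steiner $(q^2+q+2,q+2,3)$-system only for $q\in\{2,4\}$, the case $q=2$ terminating at $(8,4,3)$ and the case $q=4$ extending further exactly along the Witt chain $(22,6,3)\to(23,7,4)\to(24,8,5)$, these being the unique designs with the stated parameters. Together with the degenerate case $n=r+1$ (where $q=1$, $k=r$, and the partition is $\binom{[r+1]}{r}$) and the base case $r=2$, this gives precisely the three families in the statement. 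I expect the lower bound and the reduction of equality to a uniform Steiner system with extremal links to be elementary consequences of the link lemma and convexity, while the bulk of the difficulty — and the reliance on deep, non-self-contained results — lies in this classification of admissible towers.
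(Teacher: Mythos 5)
Your proof is correct, and your lower-bound argument is genuinely different from the paper's. The paper uses two kinds of links: vertex links handled by induction on $r$ (its inequality (\ref{degrees2})) and $(r-2)$-set links handled by applying de Bruijn--Erd\H{o}s directly (its inequality (\ref{degrees3})), and it combines these with the partition identity (\ref{cover2}) via the Chebyshev sum inequality and Jensen's inequality to show the average clique size is at most $q+r-1$. You use the vertex links only, and your supporting-line bound $\binom{a}{r}\ge\binom{k}{r}+c(a-k)$ together with the sign fact $ck>\binom{k}{r}$ replaces Chebyshev and Jensen outright; this is leaner, and the $(r-2)$-set links become unnecessary. Your equality analysis (strict convexity forces $a_i=k$ for all $i$; tightness of $\sum_x d_x$ forces every vertex link to be an extremal partition, with near-pencils excluded by uniformity of block size rather than by the paper's ``clique of size $n-1$'' argument) reaches the same waypoint --- the partition is a Steiner $(n,q+r-1,r)$-system --- that the paper obtains from equality in (\ref{degrees3}). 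Two points need attention in your write-up. First, state explicitly that cliques of size less than $r$ may be discarded, so $a_i\ge r$ for all $i$; this is what places the $a_i$ in the region where $\binom{a}{r}$ is strictly convex, which both the tangent-line inequality and its equality case require. Second, your classification leans on a black box that is not purely ``classical'': divisibility arguments (the paper's explicit computation, $(q+2)\mid 12$ for $r=3$, and the conditions (\ref{divisibility2}) in general) only narrow the $r=3$ case to $q\in\{1,2,4,10\}$, and eliminating $q=10$ requires Lam's computer-assisted non-existence of a projective plane of order $10$ --- your tower argument does make that result applicable (the vertex links of a Steiner $(112,12,3)$-system would be planes of order $10$), but it must be cited separately, as the paper does. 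Apart from this, your classification is the paper's argument in different clothing, resting on the same three external inputs (divisibility, Lam, and existence of the $(8,4,3)$, $(22,6,3)$, $(23,7,4)$, $(24,8,5)$ systems); note also that uniqueness of the Witt designs, which you invoke, is never needed, since the theorem asserts only that an extremal partition is \emph{some} Steiner system with those parameters.
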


Note that in the third case above
 such Steiner systems are known to exist for $r
\in \{3,4,5\}$, and so for those values of $r$ we have a complete
characterization of equality in Theorem \ref{lowerbound2}. The case
$r = 2$ is the de Bruijn-Erd\H os Theorem. We note that not a single
construction of a Steiner $(n,m,r)$-system is known for any $n>m > r >
5$. This is considered to be one of the major open problems in design
theory~\cite{vW}.

\subsection{Zarankiewicz problem}

There is a tight connection between \cp$(n,r)$ and the Zarankiewicz problem from extremal graph theory.
The {\em Zarankiewicz number} $z(m,n,s,t)$ denotes the maximum possible number of $1$s in an $m \times n$ matrix containing no $s \times t$ minor
consisting entirely of $1s$. This can be rephrased in terms of the maximum number of edges in an $m \times n$ bipartite graph containing no
complete bipartite subgraph with $s$ vertices in the part of size $m$ and $t$ vertices in the part of size $n$.
Our results for the  clique partition number $\cp(n,r)$ imply the following new asymptotically sharp results for Zarankiewicz numbers.

\begin{theorem} \label{z} Fix $r \geq 3$.
If $m = (1 + o(1)) n^{r/2}$, then
$z(m,n,2,r) = (1 + o(1))\sqrt{n}\,m$ as $n \rightarrow \infty$.
\end{theorem}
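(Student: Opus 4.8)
The plan is to prove the two bounds separately, passing between an $m\times n$ matrix and the neighbourhoods of its $m$ rows. Given a feasible matrix, I regard each row $x$ as the set $N(x)\subseteq[n]$ of columns in which it has a $1$, and write $d_x=|N(x)|$, so the number of $1$s is $\sum_x d_x$. A forbidden $2\times r$ all-ones minor is exactly a pair of rows sharing $r$ common $1$-columns, i.e.\ an $r$-subset of $[n]$ contained in two of the sets $N(x)$. Hence the matrix is feasible if and only if every $r$-subset of $[n]$ lies in \emph{at most one} $N(x)$, and counting incidences between rows and the $r$-subsets they contain gives the single inequality
\begin{equation}\label{zcount}
\sum_x \binom{d_x}{r}\le\binom{n}{r}.
\end{equation}
This is the only structural fact I would use.

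For the upper bound I would extract $\sum_x d_x$ from \eqref{zcount} by convexity. Put $e_x=\max(d_x-r+1,\,0)$; since every factor of $d_x(d_x-1)\cdots(d_x-r+1)$ is at least $d_x-r+1$, one has $\binom{d_x}{r}\ge e_x^{\,r}/r!$, so \eqref{zcount} yields $\sum_x e_x^{\,r}\le r!\binom{n}{r}\le n^r$. Because $t\mapsto t^r$ is convex, the power-mean (equivalently H\"older) inequality gives $\left(\sum_x e_x\right)^r\le m^{r-1}\sum_x e_x^{\,r}\le m^{r-1}n^r$, whence $\sum_x e_x\le m^{(r-1)/r}n=(1+o(1))\sqrt n\,m$ after inserting $m=(1+o(1))n^{r/2}$. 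Since $d_x\le e_x+(r-1)$ for every $x$ and $(r-1)m=O(n^{r/2})=o(\sqrt n\,m)$, this proves $z(m,n,2,r)\le(1+o(1))\sqrt n\,m$.

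For the matching lower bound I would realise a near-optimal packing as a matrix. Take the family $H$ together with its clique partition $A_1,\dots,A_t$ furnished by Theorem~\ref{t3}, so that $t=(1+o(1))n^{r/2}$ and $|H|=\binom{n}{r}-o(n^r)$; the cliques of this construction are balanced, each of size $(1+o(1))\sqrt n$ (for equal-size cliques this is forced, since $t\binom{k}{r}=(1-o(1))\binom{n}{r}$ gives $k=(1+o(1))\sqrt n$). I then form the bipartite graph whose left vertices are $A_1,\dots,A_t$, whose right vertex set is $[n]$, and which joins $A_i$ to each of its elements. As the $A_i$ form a partition, no $r$-set lies in two of them, so the resulting matrix is $2\times r$-free, and its number of $1$s is $\sum_i|A_i|=(1+o(1))\sqrt n\,t=(1+o(1))\sqrt n\,m$. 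Finally I would adjust the row count to exactly $m$: since $t\sim m$ one either appends $m-t=o(m)$ zero rows or deletes $t-m=o(m)$ rows, altering the count by $o(m)\cdot O(\sqrt n)=o(\sqrt n\,m)$. This gives $z(m,n,2,r)\ge(1-o(1))\sqrt n\,m$, and combined with the upper bound the theorem follows.

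The main obstacle is the lower bound, and specifically the need for the cliques of the Theorem~\ref{t3} construction to be (nearly) balanced. Feasibility and the clique count alone do \emph{not} force many $1$s: already for $r=2$ a near-pencil partitions $\binom{[n]}{2}$ into $n=(1+o(1))n^{r/2}$ cliques yet yields only $O(n)=o(\sqrt n\,m)$ edges, because a single clique absorbs almost all of $[n]$. The lower bound therefore relies essentially on selecting the balanced (projective-plane-type) construction rather than an arbitrary optimal partition, and on the uniform size $(1+o(1))\sqrt n$ of its cliques. The upper bound, by contrast, is the routine convexity computation above, where the only care needed is in discarding the negligible contribution of rows of degree below $r$.
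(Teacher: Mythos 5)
Your route is in substance the paper's own: the upper bound is the K\H{o}v\'ari--S\'os--Tur\'an double count (which the paper packages as Lemma \ref{zarankiewicz} together with an asymptotic version of it), and the lower bound turns the clique family of Theorem \ref{t3} into a point--clique incidence bipartite graph. Your upper bound computation is correct and complete, and you have correctly isolated the crux of the lower bound: the clique count $t=(1+o(1))n^{r/2}$ and the covering count $|H|=\binom{n}{r}-o(n^r)$ alone do \emph{not} force many incidences (your near-pencil example makes this point well), so one must appeal to structural properties of the specific construction.

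That is exactly where there is a genuine, though localized, gap. The assertion that ``the cliques of this construction are balanced, each of size $(1+o(1))\sqrt n$'' is only true when $n=q^2$ with $q$ a prime power, in which case the cliques $C_p=\{(x,p(x)):x\in \F_q\}$ all have size exactly $q$. For every other $n$, the construction proving Theorem \ref{t3} is obtained by taking the smallest prime-power square $n'=q^2>n$ and restricting the cliques to a chosen $n$-subset of the $n'$ points; since $n'-n$ may be far larger than $q$, individual restricted cliques can shrink to size $o(\sqrt n)$ or even vanish, so they are not balanced, and your parenthetical justification (which presupposes equal sizes) does not apply to them. The conclusion you need, namely that the total number of incidences is still $(1+o(1))\sqrt n\,m$, is true but requires an extra argument: for instance, every point of $\F_q^2$ lies in exactly $q^{r-1}$ of the cliques $C_p$, so after restricting to $n$ columns the incidence count is $nq^{r-1}=(1+o(1))\sqrt n\, n^{r/2}$ (equivalently, the $o(n)$ deleted columns carry only $o(\sqrt n\,m)$ incidences); the paper instead finishes by computing the expected number of edges in an induced subgraph on randomly chosen subsets of sizes $n$ and $m$ of the two sides. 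With that one additional step (plus your row-count adjustment, which is fine), your proof is complete.
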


The special case $r=3$ had earlier been shown by Alon-R\'onyai-Szab\'o
\cite{ARS}, as part of a more general result motivated by
a question in discrepancy theory posed
by Matou\'sek.  However, our constructions here are different.

Theorem \ref{z} is in contrast to  a result of F\"uredi~\cite{F}
which shows that for each fixed $r \ge 2$, we have $z(m,n,2,r) =
(1 + o(1))\sqrt{(r-1)n}\,m$ whenever $m = (1 + o(1))n$.  The problem
of determining the asymptotic behavior of $\cp(n,r)$ for fixed $ r\ge
2$  seems more challenging than that of
determining the  Zarankiewicz numbers
$z(m,n,2,r)$ for $m = (1 + o(1))n^{r/2}$.
\bigskip

\subsection{Organization}

In Section 2, we discuss inversive planes and give a self-contained
presentation of inversive planes.
In Section 3, we discuss $\cp(n,r)$ for $r > 3$,
starting in Section 3.1 with the
proof of Theorem \ref{lowerbound},
proceeding in Section 3.2 with the proof of Theorem \ref{t3}, and
mentioning an alternative construction for $r \in \{4,5\}$ in
Section 3.3.
In Section 4, we prove Theorem \ref{lowerbound2}, and in Section 5 we
point out the connection to the Zarankiewicz problem and prove
Theorem \ref{z}.
\section{Inversive planes}

In this section, we give an elementary construction of inversive planes of prime power order $q \equiv 3$ (mod $4$) which may be of independent interest.  As mentioned in the introduction, this presentation is not new, but it appears  not to be well-known.

Recall that the blocks of an inversive plane $\pi$ are called circles and that if $\pi$ has $n$ points, then $n$ is necessarily of the form $q^2 + 1$ for some integer $q \geq 2$, and the number $q$ is called
the {\em order} of $\pi$.
In an inversive plane of order $q$, it is well-known that every circle has $q + 1$ points, every
point is in $q(q + 1)$ circles, and the total number of circles is $q^3 + q$ (see~\cite{DH} for more information on inversive planes).

The issue of constructing inversive planes of order $q$ has quite a long history beginning in the 1930s~\cite{W,B,Benz,D} and it is known that inversive planes of all prime power orders exist.
We will now present an elementary presentation of inversive planes.

{\bf Construction.}
For a prime power $q \equiv 3$ (mod 4) and $n = q^2 + 1$, we consider the vertex set $[n]$ as $(\field \times \field) \cup \{v\} := \field^2 \cup \{v\}$.
For $a=(a_1, a_2) \in \field^2$ and $\lambda \in \F_q\setminus \{0\}$, define the circle with center $a$ and finite radius $\lambda$ to be
$$C(a,\lambda)=\{(x_1, x_2) \in \field^2: (x_1-a_1)^2+(x_2-a_2)^2=\lambda\}.$$
There are $q^3 - q^2$ such circles, since there are $q^2$ choices for $a \in \field^2$ and then $q - 1$ choices for $\lambda$.

For each $a \in \field^2$ and $\mu \in \F_q$ define the following sets:
\begin{eqnarray*}
C(a) &=& \{(x_1,x_2)\in \field^2: x_2 - a_2 = a_1 x_1\} \cup \{v\} \\
C(\mu) &=& \{(x_1,x_2) \in \field^2 : x_1 = \mu\} \cup \{v\}.
\end{eqnarray*}
It is convenient to refer to these sets as circles too. Note that
each of them
has $q + 1$ points. These $q^2 + q$ special circles are in one-to-one
correspondence with the affine lines of $\field^2$; just add $\{v\}$,
a point at infinity, to each of the affine lines. The circles $C(a)$
come from lines with finite slope, whereas the circles $C(\mu)$ come
from lines with infinite slope. \qed

The total number of sets defined in our construction is $q^3 + q$. It remains to show that all the circles together form an inversive plane of order $q$, by
verifying the three axioms. In the Euclidean plane, every three points determine a unique circle unless they are collinear, and the basis for our construction is that this  remains true in finite fields. We make this precise in the next result.

\begin{lemma} Every three non-collinear points in $\field^2$ lie in a unique circle $C(a, \lambda)$.  No three collinear points lie on a circle $C(a, \lambda)$.
\end{lemma}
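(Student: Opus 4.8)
The plan is to translate the defining equation of a circle into a general ``conic'' form and reduce the problem to linear algebra. Expanding $(x_1-a_1)^2+(x_2-a_2)^2=\lambda$ shows that $C(a,\lambda)$ is exactly the zero set in $\field^2$ of a polynomial
$$x_1^2 + x_2^2 + D x_1 + E x_2 + F = 0,$$
where $D=-2a_1$, $E=-2a_2$, and $F=a_1^2+a_2^2-\lambda$. Since $q$ is odd, $2$ is invertible in $\field$, so this is a bijection between triples $(D,E,F)$ and pairs $(a,\lambda)$: one recovers $a_1=-D/2$, $a_2=-E/2$, and $\lambda=\tfrac14(D^2+E^2)-F$. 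Thus a circle of finite radius is precisely such a zero set carrying the extra condition $\lambda\neq 0$.

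For existence and uniqueness, given three points $p_i=(p_{i,1},p_{i,2})$, requiring each to satisfy the conic equation gives a linear system in the unknowns $(D,E,F)$ whose coefficient matrix is
$$\begin{pmatrix} p_{1,1} & p_{1,2} & 1 \\ p_{2,1} & p_{2,2} & 1 \\ p_{3,1} & p_{3,2} & 1 \end{pmatrix}.$$
Its determinant is, up to sign, the standard collinearity determinant, which vanishes exactly when the three points are collinear. Hence for three non-collinear points the system has a unique solution $(D,E,F)$, giving a unique zero set through the three points.

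It remains to verify that this zero set is a genuine circle, i.e.\ that the associated $\lambda$ is nonzero; this is the step where the hypothesis $q\equiv 3\pmod 4$ enters and is the main point to get right. If $\lambda=0$, the equation reads $(x_1-a_1)^2+(x_2-a_2)^2=0$, and because $-1$ is a non-square in $\field$ when $q\equiv 3\pmod 4$, the only solution is $(x_1,x_2)=(a_1,a_2)$. This would force the three distinct points to coincide, a contradiction, so $\lambda\neq 0$ and the unique circle exists.

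For the second claim I would show that any affine line meets any circle $C(a,\lambda)$ in at most two points. Parametrizing a line as $t\mapsto(s_1+td_1,\,s_2+td_2)$ with $(d_1,d_2)\neq(0,0)$ and substituting into the circle equation yields a quadratic in $t$ with leading coefficient $d_1^2+d_2^2$. The same non-square fact gives $d_1^2+d_2^2\neq 0$, so the quadratic is nondegenerate and has at most two roots in $\field$; therefore no three collinear points can lie on a single $C(a,\lambda)$. The only real subtlety throughout is the repeated use of the fact that $u^2+v^2=0$ forces $u=v=0$ over $\field$ when $q\equiv 3\pmod 4$, which drives both the nondegeneracy of the ``radius'' and the at-most-two-points intersection property.
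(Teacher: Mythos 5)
Your proof is correct, and its skeleton matches the paper's: reduce existence and uniqueness to a linear system whose nondegeneracy is exactly non-collinearity, then invoke Euler's criterion (that $-1$ is a non-square in $\field$ when $q \equiv 3 \pmod 4$) both to rule out $\lambda = 0$ and to bound line--circle intersections. The difference lies in how you linearize. The paper keeps the unknowns $(a_1,a_2,\lambda)$ and subtracts the three circle equations pairwise, obtaining a $2\times 2$ system in $(a_1,a_2)$; this forces an extra (if easy) verification that the $\lambda$ recovered from the first equation also satisfies the other two. You instead change variables to the conic coefficients $(D,E,F)$, so that all three points enter symmetrically, the coefficient matrix is the standard $3\times 3$ collinearity determinant, and no consistency check is needed: uniqueness of the circle follows at once from uniqueness of $(D,E,F)$ together with the bijection between $(D,E,F)$ and $(a,\lambda)$, which you correctly justify by the invertibility of $2$ for odd $q$. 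Similarly, for the second claim the paper substitutes $x_2 = b_1x_1 + b_2$ and treats vertical lines as a separate, ``similar'' case, while your parametrization $t \mapsto (s_1+td_1,\,s_2+td_2)$ handles all lines uniformly, with leading coefficient $d_1^2+d_2^2 \neq 0$ by the same non-square fact. Both arguments rest on precisely the same two ingredients; yours is organized a little more cleanly (symmetric system, single line case), while the paper's works with smaller matrices.
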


\proof Let $x=(x_1, x_2), y=(y_1, y_2), z=(z_1, z_2)$ be distinct
non-collinear points in $\field^2$.  We will show that there is a unique circle $C(a, \lambda)$ that contains all three of them with $\lambda \in \F_q\setminus \{0\}$.  We wish to determine the number of solutions $(a, \lambda)$ where $a \in \field^2$ and $\lambda \in \F_q\setminus\{0\}$ to the equations
\begin{align}(x_1-a_1)^2+(x_2-a_2)^2&=\lambda \label{1} \\
(y_1-a_1)^2+(y_2-a_2)^2 &=\lambda \label{2}\\
(z_1-a_1)^2+(z_2-a_2)^2 &=\lambda \label{3}
\end{align}
Now (\ref{1})$-$(\ref{2}) and (\ref{2})$-$(\ref{3}) give
$$ \left( \begin{array} {cc}
2(x_1-y_1) & 2(x_2-y_2) \\
2(y_1-z_1) & 2(y_2-z_2) \end{array} \right)
\left( \begin{array} {c} a_1 \\ a_2 \end{array} \right)
=\left( \begin{array} {c}  x_1^2-y_1^2+x_2^2-y_2^2 \\ y_1^2-z_1^2+y_2^2-z_2^2 \end{array} \right).$$
Since $x, y, z$ are non-collinear, the coefficient matrix above is invertible, and hence there is a unique $a=(a_1, a_2)$ that satisfies the matrix system above. For this particular choice of $a$, define
$\lambda$ using (\ref{1}).
It is straightforward to see that $\lambda$ automatically
satisfies both (\ref{2}) and (\ref{3}) and therefore
$x, y$ and $z$ determine a unique circle.

It remains to show that $\lambda\ne 0$.
If $\lambda=0$, then since $q \equiv 3$ (mod 4) we may apply Euler's
Theorem which says that -1 is not a square modulo $q$. Consequently,
the only solution to equations (\ref{1}), (\ref{2}) and (\ref{3}) is $x =
y=(a_1, a_2)$.  This contradicts the fact that $x,y,z$ are distinct and
therefore $\lambda \ne 0$.

Now we show that if $C(a, \lambda)$ is a circle then $|C(a,\lambda) \cap C(b)| \le 2$ for $b = (b_1,b_2) \in \field^2$ and also
$|C(a,\lambda) \cap C(\mu)| \leq 2$ for $\mu \in \F_q$. In the first instance, $(x_1, x_2) \in C(a,\lambda)$ means that
$$(x_1-a_1)^2+(x_2-a_2)^2=\lambda.$$
Substituting $x_2=b_1 x_1+b_2$ above gives the quadratic equation
$$(1+b_1^2)x_1^2 + c_1 x_1+c_2=0$$
for some $c_1, c_2 \in \F_q$.  Since $q \equiv 3$ (mod 4), Euler's Theorem implies that $1 + b_1^2 \ne 0$ and hence the quadratic above has at most two solutions.  Each of these solutions gives a unique solution for $x_2$ and consequently $|C(a,\lambda) \cap C(b)| \le 2$ as required. The proof that $|C(a,\lambda) \cap C(\mu)| \leq 2$
is similar.  \qed

The above lemma gives a family $P$ of $q^3 + q$ sets in ${[n]
\choose 3}$ when $n = q^2 + 1$ with the property that every set of
three distinct points in $[n]$ is covered by exactly
one set. Indeed, the Lemma clearly shows this for any three points
in $\field^2$, since if they are non-collinear they lie in a unique
circle $C(a, \lambda)$, and if they are collinear they lie in precisely
one affine line $C(a)\setminus\{v\}$ or $C(\mu)\setminus\{v\}$. If
the three points are of the form $\{x,y,v\}$ with $x,y \in \field^2$,
then since $\{x,y\}$ lies in a unique affine line, $\{x,y,v\}$ lies in
the unique extension of this line that has the form $C(a)$ or $C(\mu)$.

  We now show that $P$  is an inversive plane. By construction,  axioms
  1 and 2 are satisfied. To show that axiom 3 is satisfied we need an
  elementary counting argument using the fact that every circle has size
  $q+1$. This will be proved via the following lemma which shows that
  each circle $C(a, \lambda)$ has $q + 1$ points:

\begin{lemma}
Let $q \equiv 3$ mod 4 be a prime power, and let $(a,b) \in \field^2, \lambda \in \field \setminus\{0\}$. Then the number of solutions $(x,y) \in \field^2$ to the equation $(x-a)^2 + (y-b)^2 = \lambda$
is exactly $q + 1$.
\end{lemma}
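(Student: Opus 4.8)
The plan is to reduce to a circle centred at the origin and then recognise $x^2 + y^2$ as a norm form. First, the translation $(x,y) \mapsto (x-a,\,y-b)$ is a bijection of $\field^2$ onto itself, so the number of solutions to $(x-a)^2+(y-b)^2 = \lambda$ equals the number of pairs $(x,y) \in \field^2$ with $x^2 + y^2 = \lambda$; it therefore suffices to count the latter for a fixed nonzero $\lambda$. Because $q \equiv 3 \pmod 4$, Euler's criterion (already used above) tells us that $-1$ is a non-square in $\field$, so $\F_{q^2} = \field(i)$ with $i^2 = -1$. The nontrivial $\field$-automorphism of $\F_{q^2}$ is the Frobenius $z \mapsto z^q$, and since $(q-1)/2$ is odd we get $i^q = i\,(i^2)^{(q-1)/2} = i(-1)^{(q-1)/2} = -i$. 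Hence for $x,y \in \field$,
$$N(x+iy) = (x+iy)(x+iy)^q = (x+iy)(x-iy) = x^2 + y^2,$$
so counting solutions of $x^2 + y^2 = \lambda$ is exactly counting the elements of $\F_{q^2}$ of norm $\lambda$.

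Next I would use the structure of the norm map. The norm $N(z) = z^{q+1}$ defines a homomorphism $N : \F_{q^2}^* \to \F_q^*$ between cyclic groups of orders $q^2 - 1$ and $q - 1$. Since $q+1$ divides $q^2-1$, its image has size $(q^2-1)/(q+1) = q-1$, so $N$ is surjective and every fibre over a nonzero value has the same cardinality as the kernel, namely $(q^2-1)/(q-1) = q+1$. As $\lambda \ne 0$, the element $z = x+iy = 0$ is excluded, so all $q+1$ preimages of $\lambda$ lie in $\F_{q^2}^*$ and correspond under $(x,y) \leftrightarrow x+iy$ to genuine pairs in $\field^2$. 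This yields exactly $q+1$ solutions, as claimed.

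The only real obstacle is establishing the identification of $x^2 + y^2$ with the norm form, which rests entirely on $-1$ being a non-square (equivalently $i^q = -i$); this is precisely where the hypothesis $q \equiv 3 \pmod 4$ enters, and without it the form $x^2+y^2$ becomes isotropic and the count changes. An alternative, purely character-theoretic route avoids the extension field: writing $\chi$ for the quadratic character (with $\chi(0)=0$), the number of solutions equals $\sum_{u \in \field}(1+\chi(u))(1+\chi(\lambda-u))$, which expands to $q$ plus the Jacobi-type sum $\sum_{u}\chi(u(\lambda-u))$; the standard evaluation of this sum gives $-\chi(-1) = 1$ when $\lambda \ne 0$ and $q \equiv 3 \pmod 4$, again producing $q+1$.
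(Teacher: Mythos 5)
Your proof is correct, but it takes a genuinely different route from the paper. The paper's own argument is the character-sum computation you sketch only as an ``alternative'' in your last paragraph: writing $\chi$ for the quadratic character, it expresses the count as $\sum_{x+y=\delta}(1+\chi(x))(1+\chi(y))$, reduces the cross term to $\sum_x \chi(x(\delta-x))$, and evaluates this Jacobi-type sum via the substitution $w = -1 + \delta/(\delta - x)$ to obtain $q - \chi(-1) = q+1$. Your main argument is instead structural: since $q \equiv 3 \pmod 4$ makes $-1$ a non-square, the polynomial $X^2+1$ is irreducible, $\F_{q^2} = \F_q(i)$, and $x^2+y^2 = N(x+iy)$ is the norm form of $\F_{q^2}/\F_q$; counting solutions of $N(z)=\lambda$ then reduces to the fact that the norm homomorphism $N\colon \F_{q^2}^* \to \F_q^*$, $z \mapsto z^{q+1}$, is surjective between cyclic groups, so every nonzero fibre is a coset of its kernel and has size $(q^2-1)/(q-1) = q+1$. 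Every step checks out: the translation reduction, $i^q=-i$ because $(q-1)/2$ is odd, the identification $N(x+iy)=(x+iy)(x-iy)=x^2+y^2$, the index computation for the image of $N$, and the exclusion of $z=0$ since $\lambda \neq 0$. What your approach buys is a conceptual explanation of the answer --- $q+1$ is the order of the norm-one ``circle group'' --- with no character-sum manipulation at all; what the paper's approach buys is the uniform formula $q-\chi(-1)$ for all odd $q$, which simultaneously shows the count drops to $q-1$ when $q \equiv 1 \pmod 4$, the isotropic case you correctly flag as the one where the hypothesis is indispensable.
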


\proof
Let $S_q = |\{(x,y) \in \field^2: (x-a)^2 + (y-b)^2 = \lambda\}|$.
By translation and scaling, $S_q = |\{(x,y)\in \field^2 : x^2 + y^2 = \delta\}|$ where
$\delta = \chi(\lambda) \in \{-1,1\}$ and $\chi$ is the quadratic character of $\F_q$. Therefore
\begin{eqnarray*}
S_q &=& \sum_{x + y = \delta} (1 + \chi(x))(1 + \chi(y)) \\
&=& q + \sum_{x \in F_q} \chi(x(\delta - x)))\\
&=& q + \sum_{x \in F_q \backslash \{\delta\}} \chi\Bigl(\frac{x}{\delta - x}\Bigr)\\
&=& q + \sum_{x \in F_q \backslash \{\delta\}} \chi\Bigl(-1 + \frac{\delta}{\delta - x}\Bigr)\\
&=& q + \sum_{w \in F_q \backslash \{-1\}} \chi(w) \; \; = \; \; q - \chi(-1).
\end{eqnarray*}
Here we used that $-1 + \delta/(\delta - x)$ is a permutation of $\F_q \backslash \{-1\}$ and
that $\sum_{w \in \F_q} \chi(w) = 0$. Since $q \equiv 3$ mod $4$, $\chi(-1) = -1$ and so $|S_q| = q + 1$
\qed

We now show that axiom 3 is satisfied. So let $C(a, \lambda)$ be a circle, $u \in C(a, \lambda)$ and $v \not\in C(a, \lambda)$. For each point $x \in C(a, \lambda)\setminus \{u\}$ we have a circle $C_x$ that contains $u,v,x$. Moreover, $C_x  \cap C_{x'}=\{u,v\}$ for $x \ne x'$.
Since these $q$ circles are disjoint outside $\{u,v\}$ and  they all have size $q+1$,  their union has size $2+(q-1)q=q^2-q+2=n-(q-1)$.
Therefore we can find a point $z$ outside the union of these circles.  But there is a circle $C$ that contains $u,v,z$ and so $C \cap C(a, \lambda)=\{u\}$. Moreover, $C$ contains all the remaining $q-1$ points above, so it is  the unique circle that contains $u,v$ and intersects $C(a,\lambda)$ at $\{u\}$.  This proves axiom 3.

\emph{}
\section{Clique partitions of ${[n] \choose r}$}

In this section, we prove Theorems \ref{lowerbound} and \ref{t3}.

\subsection{Proof of Theorem \ref{lowerbound}}

Let $P$ be an optimal clique partition of ${[n] \choose r}$ which has vertex set $[n]$. Then $P_v^* = \{C\setminus \{v\} : v \in C \in P\}$ is a clique partition of ${[n-1] \choose r - 1}$ so $|P_v^*| \geq \cp(n-1,r-1)$. Noting the identity
\begin{equation}
 \sum_{C \in P} |C| = \sum_{v \in [n]} |P_v^*|, \notag
 \end{equation}
we have
\begin{equation}\label{degrees}
 \sum_{C \in P} |C| \geq n\, \cp(n-1,r-1).
\end{equation}
On the other hand since $P$ is a clique partition of ${[n] \choose r}$,
\begin{equation}\label{cover}
 \sum_{C \in P} {|C| \choose r} = {n \choose r}.
 \end{equation}
By convexity of binomial coefficients, the right hand side is a minimum when the $|C|$ all equal their average value,
which by (\ref{degrees}) is at least $n\cp(n-1,r-1)/\cp(n,r) = \delta n$ by definition of $\delta$. Inserting this in (\ref{cover}) gives (\ref{bound}). \qed

\subsection{Proof of Theorem \ref{t3}}

By the known results about the distribution of primes it suffices
to prove the result for $n=q^2$, where $q$ is a prime power. For
other values of $n$ we can then take the construction for the
smallest $n'>n$ satisfying
$n'=q^2$, with $q$ being a prime power, and consider the induced
constructions on $n$ vertices among those $n'$.

Suppose, thus, that $n=q^2$, and let $F=GF(q)$ denote the finite
field of size $q$. Let $V$ be the set of all ordered pairs $(x,y)$
with $x,y \in F$. For every polynomial $p(x)$ of degree at most
$r-1$ over $F$, let $C_p$ denote the subset $C_p=\{(x,p(x)): x \in
F \}$ of $V$. The collection ${\cal C}$ of all sets $C_p$
is a collection of $q^r=n^{r/2}$ subsets of $V$, each of size
$q=\sqrt n$. Clearly, no two members of ${\cal C}$ share more than
$r-1$ elements, as two distinct polynomials of degree at most $r-1$
can share at most $r-1$ points. Moreover, every set $\{(x_i,y_i): 1
\leq i \leq r \}$ of $r$ points of $V$ in which the $x_i$-s are
pairwise distinct is contained in a unique set $C_p$, as there is a
unique polynomial $p$ of degree at most $r-1$ satisfying
$p(x_i)=y_i$ for all $i$. Let $H$ be the set of all $r$-subsets  of $V$
contained in a member of ${\cal C}$. Then
$H$ has a clique partition with the $n^{r/2}$ cliques $C_p$, and
the only $r$-sets in $V$ that do not belong to $H$ are those that
have at least two points $(x,y)$ with the same  first coordinate.
The number of these $r$-sets is at most
$$
(1+o(1))q {q \choose 2} {q-1 \choose {r-2}}q^{r-2}=
(1+o(1)) \frac{1}{2(r-2)!}  q^{2r-1}=O(n^{r-1/2})=o(n^r),
$$
completing the proof. \qed


\subsection{An alternative construction for  $r\in \{4,5\}$}
\label{subsec:ProofOf45}

In this subsection we present an alternative construction that
proves the assertion of Theorem \ref{t3} for $r \in \{4,5\}$.
Although this is less general than the previous construction we
believe it is interesting and may provide some extra insight.

We work in the classical finite projective plane of order $q$, denoted
by $\pi = PG(2,q)$. The plane $\pi$ contains $q^2+q+1$ points that can be
represented by homogeneous coordinates $(x,y,z)$.  A non-degenerate conic
of $\pi$ is a collection of points whose homogeneous coordinates satisfy
some non-degenerate quadratic form, and it is well-known that there is
exactly one such conic in $\pi$, up to isomorphism.  A typical example
is the set of points satisfying the form $y^2 = xz$ which contains the
points $\{(0,0,1)\} \cup \{(1,x,x^2):x \in \F_q\}$.  An arc is a set
of points, no three collinear, and it is straightforward to show that
conics form arcs.  Moreover, a classical result of Segre \cite{Segre}
says that when $q$ is odd, every set of $q+1$ points, no three collinear,
is in fact a conic.

For an overview of results on conics and arcs in general, the reader is
referred to Chapters 7 and 8 of \cite{JWPH}. It is well-known that there
are precisely $q^5 - q^2$ conics in $PG(2,q)$ and that five points in
general position (that is, no three of which are collinear) determine a
unique conic.  We can use these facts to cover 4-sets and 5-sets of $\pi$.

{\bf The case $r = 4$.} Distinguish a special point $P$ of $\pi$ and
consider the set of all conics passing through $P$.  Our points will be
the points of $\pi \setminus \{P\}$.  Hence, $n = q^2+q$.  Let $X$ be
the number of conics passing through an arbitrary point $Q$ of $\pi$.
By the transitive properties of $Aut(\pi)$, it follows that $X$ is
independent of $Q$.  We count pairs $(Q,\CC)$ with $Q$ a point of the
conic $\CC$, by first counting $Q$, and then counting $\CC$.  This give us
$$ (q^2+q+1) X = (q^5-q^2)(q+1)  = q^2(q-1)(q^2+q+1)(q+1).$$

If follows that there are $X = q^4-q^2$ conics through a point of $\pi$.
The number of 4-sets in the set $\pi \setminus \{P\}$ is ${q^2+q
\choose 4}=  (\frac{1}{24} + o(1)) q^8$.  The number of 4-sets of $\pi
\setminus \{P\}$ covered by the conics through $P$ is $(q^4-q^2) {q
\choose 4} = (\frac{1}{24} + o(1)) q^8$ (the difference is asymptotic to
$\frac{10}{24}q^7$ which is of a lower order as $q \rightarrow \infty$).
Hence, we have shown that for $n=q^2+q$, there is a collection of $(1 -
o(1))q^4=(1-o(1))n^2$ proper edge-disjoint subcliques of ${[n] \choose
4}$ that cover all but $o(q^8)=o(n^4)$ edges of ${[n] \choose 4}$.

{\bf The case $r = 5$.} We can repeat our construction, using all points
and conics of $\pi$, to obtain a similar result for $r=5$.
The construction here is in fact simpler than the one
 for $r = 4$. The total number
of 5-sets of points in $\pi$ is ${q^2+q+1 \choose 5} = (\frac{1}{120}
+ o(1)) q^{10}$.  The number of 5-sets covered by conics is $(q^5-q^2)
\times {q+1 \choose 5} = (\frac{1}{120} + o(1)) q^{10}$ (the difference
is asymptotically $\frac{1}{12}q^9$ which is of lower order).  Hence,
we have shown that for $n=q^2+q+1$, there is a collection of $(1 -
o(1))q^5=(1-o(1))n^{5/2}$ proper edge-disjoint subcliques of ${[n] \choose
5}$ that cover all but $o(q^{10})=o(n^5)$ edges of ${[n] \choose 5}$.
This completes the (second) proof of
Theorem \ref{t3} for $r \in \{4,5\}.$ \qed

\section{Proof of Theorem \ref{lowerbound2}}

We first address the lower bound in Theorem \ref{lowerbound2}.
Before beginning the proof we need the following simple lemma,
known as Chebyshev Sum Inequality (c.f., e.g., \cite{HLP}.)

\begin{lemma}[Chebyshev Sum Inequality]
\label{aibi}
Suppose that $p>1$ is an integer and
$f, g: [p] \rightarrow {\mathbb Z^+}$ are non-decreasing functions. Then
$$\sum_{i=1}^p f(i)g(i) \ge \frac{1}{p}
 \sum_{i=1}^p f(i) \sum_{i=1}^p g(i).$$
\end{lemma}
There are several simple proofs of this inequality. A short one is
to observe that the right hand side is the expectation of
the sum $\sum_{i=1}^p f(i) g(\sigma(i))$, where $\sigma$ is a
random uniformly chosen permutation of $\{1,2,\ldots ,p\}$, and
that the maximum value of this sum is obtained when $\sigma$ is the
identity, since if $\sigma(i) > \sigma(j)$ for some $i<j$, then the
permutation $\sigma'$ obtained from $\sigma$ by swapping the values
of $\sigma(i)$ and $\sigma(j)$ satisfies
$\sum_{i=1}^p f(i) g(\sigma'(i)) \geq
\sum_{i=1}^p f(i) g(\sigma(i))$.

\subsection{Proof of the lower bound}

In this section we show the lower bound in Theorem \ref{lowerbound2}.
Define $(n)_r := n(n - 1) \dots (n - r + 1)$ and $\phi(n,r) = {n \choose r}/{q + r - 1 \choose r}$. Let $P$ be a clique partition of ${[n] \choose r}$ where $n = q^2 + q + r - 1$ and $q$ is a positive real number. We aim to show
$|P| \geq \phi(n,r)$, and we proceed by induction on $r$. For $r = 2$, this follows from the de Bruijn-Erd\H os Theorem since $\phi(n,2) = q^2 + q + 1 = n$.
Since $\phi(r+1,r) = r + 1$, we may assume $n > r + 1$. Suppose $r > 2$ and let $C_1,C_2,\dots,C_p$ be the cliques in $P$, of respective sizes
$n_1 \leq n_2 \leq \dots \leq n_p$. Note as in (\ref{degrees}),
\begin{equation}\label{degrees2}
 \sum_{i=1}^p n_i \geq n\phi(n-1,r-1).
\end{equation}
For a set $S \subset [n]$, let $P_S^* = \{C \setminus S : C \in P\}$. Then
\begin{equation}
 \sum_{i=1}^p {n_i\choose r-2} = \sum_{S \in {[n]\choose r-2}} |P_S^*|. \notag
 \end{equation}
 By the de Bruijn-Erd\H os Theorem, since $P_S^*$ is a clique partition of ${[n - r + 2] \choose 2}$, $|P_S^*| \geq n - r + 2$ and therefore
\begin{equation}\label{degrees3}
 \sum_{i=1}^p {n_i \choose r - 2} \geq {n \choose r - 2} \, (n - r + 2)
\end{equation}
with equality if and only if there is a projective plane on $n - r + 2$ points, $q$ is an integer, and every clique in $P$ has size $q + r - 1$. Note
here that we are ruling out the possibility of a near pencil in every $P_S^*$, otherwise $P$ would have a clique of size $n - 1$ and then $|P| \geq 1 + {n - 1 \choose r - 1} > \phi(n,r)$ since $n > r + 1$. On the other hand since $P$ is a clique partition of ${[n] \choose r}$,
\begin{equation}\label{cover2}
 \sum_{i=1}^p {n_i \choose r} = {n \choose r}.
 \end{equation}
Since $n_i \geq r$ for all $i$, we may apply Lemma \ref{aibi} with $f(i)=(n_i)_{r-2}$ and $g(i)=(n_i-r+2)(n_i-r+1)$ to obtain
\[ \sum_{i = 1}^p (n_i)_r \geq \frac{1}{p} \sum_{i=1}^p (n_i)_{r-2} \cdot \sum_{i=1}^p (n_i -r + 2)(n_i - r + 1).\]
By (\ref{degrees3}) and (\ref{cover2}), we have
\[ p \geq \frac{1}{n - r + 1} \sum_{i=1}^p (n_i - r + 2)(n_i - r + 1).\]
The function $h(x)=(x-r+2)(x-r+1)$ is convex for $x \ge r$. Hence Jensen's inequality yields
$$\sum_{i=1}^p (n_i - r + 2)(n_i - r + 1)\ge p(c-r+2)(c-r+1)$$
where $c$ is the average size of a clique in $P$. Consequently,
\[ n - r + 1 \geq (c - r + 2)(c - r + 1)\]
and since $n = q^2 + q + r - 1$, we find $c \leq q + r - 1$. Together with (\ref{degrees2}) we get
\[ p(q + r - 1) \geq n\phi(n-1,r-1).\]
Using  the identity
$\phi(n,r)(q + r - 1) = n\phi(n-1,r-1)$, this gives $p \geq \phi(n,r)$ and completes the proof of the lower bound in Theorem \ref{lowerbound2}.

\subsection{Equality in Theorem \ref{lowerbound2}}

For the constructions giving equality in Theorem \ref{lowerbound2}, we note that for $r = 2$ this is exactly the de Bruijn-Erd\H os Theorem.
Moving on to $r \geq 3$, we have $n = q^2 + q + r - 1$, and equality holds in the above arguments if and only if equality holds
in (\ref{degrees3}), which means that every $(r-2)$-set $S$ has
the property that $P^*_S$ is an optimal
 partition ${[n]\setminus S \choose 2}$. Therefore every
 $r$-set is covered at most (in fact exactly) once and $P$ is a Steiner
 $(n,q+r-1,r)$-system. It is well known (see~\cite{vW}) that the following
 divisibility requirements are
necessary for the existence of such Steiner systems:
\begin{equation}\label{divisibility}
{q + r - 1 - i \choose r-i}  \;\; \Big| \;\; {n-i\choose r-i} \quad \mbox{ for } i = 0,1,\dots,r-1.
\end{equation}
Equivalently, since $n = q^2 + q + r - 1$, (\ref{divisibility}) is equivalent to:
\begin{equation}\label{divisibility2}
\prod_{j=1}^{r - i} (q + r - i - j) \; \; \; \Big| \; \; \; \prod_{j=1}^{r - i} (q^2 + q + r - i - j) \quad \mbox{ for } i = 0,1,\dots,r-1.
\end{equation}
For each $r$, let $Q_r$ be the set of values of $q$ permitted by (\ref{divisibility2}). Then the $Q_r$ form a decreasing chain $Q_3 \supseteq Q_4 \supseteq \dots$
and $1 \in Q_r$ for all $r \geq 3$. Now $Q_3$ is the set of $q$ such that
\begin{eqnarray*}
q &|& (q^2 + q) \\
q(q + 1) &|& (q^2 + q + 1)(q^2 + q) \\
q(q + 1)(q + 2) &|& (q^2 + q + 2)(q^2 + q + 1)(q^2 + q).
\end{eqnarray*}
The first two conditions are trivially satisfied, and the last is
\[ (q + 2) \; | \; (q^2 + q + 2)(q^2 + q + 1).\]
This is equivalent to $(q + 2)|12$. It follows that $q \in \{1,2,4,10\}$, $Q_3 = \{1,2,4,10\}$ and then $n \in \{4,8,22,112\}$.
Since Steiner $(n,q+2,3)$-systems exist for $(n,q) \in \{(4,1),(8,2),(22,4)\}$ (see van Lint and Wilson~\cite{vW}),
Theorem \ref{lowerbound} is tight in those cases and
\[ \cp(4,3) = 4 \quad \cp(8,3) = 14 \quad \cp(22,3) = 77.\]
The case $q = 10$ may be ruled out, since it is accepted (see~\cite{Lam}) that a projective plane of order ten does not exist, hence it is impossible that $P^*_S$ is a partition of ${[n]\setminus S\choose 2}$ for $|S|=1$.
For $r = 4$, we have the same divisibility requirements as $r = 3$ together with
\[ q(q + 1)(q + 2)(q + 3) \; \; | \; \; (q^2 + q + 3)(q^2 + q + 2)(q^2 + q + 1)(q^2 + q) .\]
Equivalently, we are looking for $q \in Q_3$ satisfying
$$(q + 2)(q + 3) \; \; | \; \; (q^2 + q + 3)(q^2 + q + 2)(q^2 + q + 1).$$
This is not satisfied for $q \in \{2,10\}$ and hence the only values $q \in Q_3=\{1,2,4,10\}$ which satisfy these requirements are $q \in \{1,4\}$ and $Q_4 = \{1,4\}$.
Since a Steiner $(23,7,4)$-system exists which corresponds to $q = 4$, we have
\[ \cp(5,4) = 5 \quad \cp(23,4) = 253.\]
For $r = 5$, we are seeking Steiner $(q^2 + q + 4,q + 4,5)$-systems, which implies $q \in \{1,4\} = Q_5$ and $n \in \{6,24\}$. Since
a Steiner $(24,8,5)$-system exists, we have the complete solution
\[ \cp(6,5) = 6 \quad \cp(24,5) = 759.\]
Finally we show $Q_r = \{1\}$ for $r \geq 6$ to complete the proof. First we have $Q_5 = \{1,4\}$ as just seen, and since $Q_r \subseteq Q_5$ for all $r \geq 5$,
we only have to show $Q_6 = \{1\}$. If $4 \in Q_6$, then the divisibility requirement (\ref{divisibility2}) with $i = 0$ and $q = 4$ is
\[ 9 \cdot 8 \cdot 7 \cdot 6 \cdot 5 \cdot 4 \; | \; 25 \cdot 24 \cdot 23 \cdot 22 \cdot 21 \cdot 20.\]
This is false, since $3$ is a prime factor with multiplicity
three on the left, and only two on the right.
Therefore $Q_r = \{1\}$ for $r \geq 6$, and the only possible clique partition achieving equality in Theorem \ref{lowerbound2} has
$q = 1$ and $n = r + 1$, and therefore it must be ${[r + 1] \choose r}$. \qed

\section{Zarankiewicz Problem and Theorem \ref{z}}

In this section we point out the connection between Zarankiewicz
numbers and clique partitions.  As a byproduct, we prove Theorem \ref{z}.
Recall that the Zarankiewicz number $z(m,n,s,t)$ is the maximum number of edges in an $m \times n$ bipartite graph containing no
complete bipartite subgraph with $s$ vertices in the part of size $m$ and $t$ vertices in the part of size $n$.
The clique partition number $\cp(n,r)$ is related to the Zarankiewicz numbers in the following sense:

\begin{lemma} \label{zarankiewicz}
Let $n \geq r$ be a positive integer and $m = {n \choose r}/{k \choose r}$. Then there is a partition of ${[n] \choose r}$ into $m$ cliques of size $k$ if and only if $z(m,n,2,r) = km$.
\end{lemma}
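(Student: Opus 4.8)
The plan is to realize both conditions through the \emph{incidence bipartite graph}. Given any collection of $m$ subsets $A_1,\dots,A_m\subseteq[n]$, form the bipartite graph $G$ with parts $M=\{A_1,\dots,A_m\}$ and $N=[n]$, joining $A_i$ to a point $p$ exactly when $p\in A_i$. A copy of $K_{2,r}$ with its $2$ vertices in $M$ and its $r$ vertices in $N$ corresponds precisely to two sets $A_i,A_j$ that both contain a common $r$-subset of $[n]$; hence $G$ is $K_{2,r}$-free (in the sense counted by $z(m,n,2,r)$, where the role of $s=2$ is played by $M$ and that of $t=r$ by $N$) if and only if no $r$-set of $[n]$ lies in two of the $A_i$, i.e.\ every $r$-set is covered \emph{at most} once. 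The number of edges of $G$ is $\sum_i|A_i|=\sum_i d_i$, where $d_i$ is the degree (neighbourhood size) of $A_i$. This dictionary translates each half of the lemma into a statement about degree sequences of $K_{2,r}$-free graphs; throughout I take $m=\binom{n}{r}/\binom{k}{r}$ to be a positive integer, as the statement presupposes, and note $k\ge r$.

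First I would establish the uniform upper bound $z(m,n,2,r)\le km$ on which both directions rest. For any $K_{2,r}$-free $G$ with block-degrees $d_1,\dots,d_m$, counting the incidences between the $A_i$ and the $r$-subsets of their neighbourhoods, grouped by the $r$-subset and using that each is covered at most once, gives
\begin{equation}
\sum_{i=1}^m\binom{d_i}{r}\ \le\ \binom{n}{r}\ =\ m\binom{k}{r}.\notag
\end{equation}
The sequence $d\mapsto\binom{d}{r}$ is (discretely) convex, since its successive differences $\binom{d}{r}-\binom{d-1}{r}=\binom{d-1}{r-1}$ are non-decreasing in $d$. Hence among all non-negative integer sequences with a fixed sum $\sum_i d_i$, the quantity $\sum_i\binom{d_i}{r}$ is minimised at the balanced sequence; a one-line smoothing step (replacing $(d_i,d_j)$ with $d_i\ge d_j+2$ by $(d_i-1,d_j+1)$ does not increase the sum) makes this precise. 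If $\sum_i d_i>mk$, then even the balanced sequence has all parts equal to $k$ or $k+1$ with at least one equal to $k+1$, so its value is at least $m\binom{k}{r}+\binom{k}{r-1}>m\binom{k}{r}$; the actual sum is then also $>m\binom{k}{r}$, contradicting the displayed bound. Thus $\sum_i d_i\le km$, and equality forces $d_i=k$ for every $i$ and forces $\sum_i\binom{d_i}{r}=\binom{n}{r}$.

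The two implications now follow from this equality analysis. For the forward direction, a partition of $\binom{[n]}{r}$ into $m$ cliques of size $k$ is exactly a family $A_1,\dots,A_m$ with $|A_i|=k$ in which every $r$-set is covered exactly once; its incidence graph is $K_{2,r}$-free with $km$ edges, so $z(m,n,2,r)\ge km$, and combined with the uniform bound this gives $z(m,n,2,r)=km$. Conversely, if $z(m,n,2,r)=km$, take an extremal $K_{2,r}$-free graph $G$ with $km$ edges; equality in $\sum_i d_i\le km$ forces every $A_i$ to have size $k$ and forces $\sum_i\binom{d_i}{r}=\binom{n}{r}$, which (since each $r$-set is covered at most once) upgrades to: every $r$-set of $[n]$ is covered \emph{exactly} once. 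That is precisely a partition of $\binom{[n]}{r}$ into $m$ cliques of size $k$. The one genuinely delicate point, and the step I would write out most carefully, is the convexity argument together with its equality case: everything else is bookkeeping through the incidence-graph dictionary, but it is the tightness of $\sum_i\binom{d_i}{r}\le\binom{n}{r}$ under $\sum_i d_i=km$ that simultaneously pins down the block sizes \emph{and} converts ``covered at most once'' into ``covered exactly once.''
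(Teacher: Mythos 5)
Your proof is correct and takes essentially the same approach as the paper: the incidence-graph dictionary, the double count $\sum_i \binom{d_i}{r} \le \binom{n}{r}$, convexity to force $z(m,n,2,r) \le km$, the clique partition itself giving the matching lower bound, and the equality analysis converting ``covered at most once'' into ``covered exactly once.'' The only cosmetic difference is that you argue convexity by discrete smoothing of integer degree sequences, where the paper invokes Jensen's inequality with the generalized binomial coefficient $\binom{e/m}{r}$; this is immaterial.
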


\begin{proof}
We follow the classical argument of \cite{KST}.
For any $m \times n$ bipartite graph with parts $A$ and $B$,
not containing a $2 \times r$ complete bipartite subgraph
(with the two vertex set lying in $A$),
\begin{equation} \label{z1} \sum_{a \in A} {d(a) \choose r} \leq {|B| \choose r}.\end{equation}
By convexity, if $e$ is the number of edges in the graph we obtain
\begin{equation} \label{z2} m{e/m \choose r} \leq {n \choose r}.\end{equation}
Suppose $e = z(m,n,2,r)$ and $e > km$. Then the above formula becomes
\[ m{k \choose r} < {n \choose r}.\]
However this contradicts the identity relating $m,n$ and $k$ in the lemma. It follows that $z(m,n,2,r) \leq km$.

Now consider the incidence graph of points and sets of size $k$ in the clique partition of ${[n] \choose r}$.
This is an $m \times n$ bipartite graph, and since we have a clique partition, it does not contain a $2 \times r$ complete bipartite
subgraph. Therefore $z(m,n,2,r) \geq km$.

If $z(m,n,2,r) = km$,  then equality holds in (\ref{z1}) and (\ref{z2}), which means that $d(a)=k$ for every $a \in A$ and every $r$-set of $B$ is covered exactly once. This gives us a partition of ${[n] \choose r}$ into $m$ cliques of size $k$.
\qed
\end{proof}

The proof of Lemma \ref{zarankiewicz} can be easily rewritten to prove
the following: Fix $r \ge 2$  and let $H \subset {[n] \choose r}$
satisfy $|H| \ge {n \choose r} -o(n^r)$. Suppose that there is a partition
of $H$ into $m=(1+o(1)){n \choose r}/{k \choose r}$ cliques of size
$k$. Then $z(m, n, 2,r) = (1 + o(1)) km$.

Using the preceding results and constructions in Sections 2 and 4, we obtain the following theorem on Zarankiewicz numbers which proves Theorem \ref{z}.

\begin{theorem}
Let $n = q^2 + 1$ and $m = qn$ where $q$ is a prime power. Then
$z(m,n,2,3) = (q^2 + q) n$. Furthermore, if $r \geq 4$, $n$ is an
integer and $m = (1 + o(1))n^{r/2}$, then
$z(m,n,2,r) = (1 + o(1))\sqrt{n}\, m$ as $n \rightarrow \infty$.
\end{theorem}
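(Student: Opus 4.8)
The plan is to read off both assertions from Lemma~\ref{zarankiewicz} and the extension of it noted just after its proof, feeding in the two constructions already established. The first assertion is immediate. For $n=q^2+1$ with $q$ a prime power, the inversive plane of order $q$ from Section~2 (which exists for every prime power by~\cite{HMS}) is a clique partition of ${[n]\choose 3}$ whose circles are cliques of the single size $k=q+1$, covering every $3$-subset of $[n]$ exactly once, with exactly $q^3+q=q(q^2+1)=qn=m$ circles. A Steiner system forces $m{k\choose 3}={n\choose 3}$, so the hypothesis $m={n\choose 3}/{k\choose 3}$ of Lemma~\ref{zarankiewicz} holds, and the lemma gives $z(m,n,2,3)=km=(q+1)qn=(q^2+q)n$.

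For the second assertion I would treat the upper and lower bounds separately. The upper bound I would keep self-contained and valid for every integer $n$, since it is precisely the convexity argument of~\cite{KST} behind Lemma~\ref{zarankiewicz}. Writing $\bar d=e/m$ for the average degree on the part of size $m$, inequality~(\ref{z2}) reads ${\bar d\choose r}\le {n\choose r}/m$; inserting $m=(1+o(1))n^{r/2}$ and ${n\choose r}=(1+o(1))n^{r}/r!$ gives ${\bar d\choose r}\le(1+o(1))n^{r/2}/r!$. Since $r$ is fixed and $\bar d\to\infty$ we have ${\bar d\choose r}=(1+o(1))\bar d^{\,r}/r!$, whence $\bar d\le(1+o(1))\sqrt n$; thus $e=m\bar d\le(1+o(1))\sqrt n\,m$ for every admissible graph, and $z(m,n,2,r)\le(1+o(1))\sqrt n\,m$.

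For the matching lower bound I would build the bipartite incidence graph of the construction in Theorem~\ref{t3}. When $n=q^2$ with $q$ a prime power the extension of Lemma~\ref{zarankiewicz} applies verbatim: that construction partitions a family $H$ with $|H|={n\choose r}-o(n^r)$ into $(1+o(1))n^{r/2}$ cliques, all of size $\sqrt n$, giving $z\ge(1+o(1))\sqrt n\,m$. For an arbitrary integer $n$ I would instead use the induced version already described in the proof of Theorem~\ref{t3}: choose the smallest $n'=(q')^2\ge n$ with $q'$ a prime power, so $n'=(1+o(1))n$ by the distribution of primes, identify $[n]$ with a subset of $\F_{q'}^2$, and form the incidence graph between the $(q')^r$ cliques $C_p$ ($\deg p\le r-1$) and the $n$ points. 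Two distinct such polynomials agree in at most $r-1$ points, so the graph has no $2\times r$ complete bipartite subgraph with the two vertices in the cliques part; and each point of $[n]$ still lies on exactly $(q')^{r-1}$ cliques, so the number of edges is $n(q')^{r-1}=(1+o(1))\sqrt n\,(q')^r$. After padding or deleting $o((q')^r)$ clique-vertices to match the prescribed $m=(1+o(1))n^{r/2}$, this gives $z(m,n,2,r)\ge(1+o(1))\sqrt n\,m$, which with the upper bound completes the proof.

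Beyond these routine convexity estimates, the only genuine care lies in the general-$n$ lower bound: one must check that restricting the clique family to $[n]$ preserves the forbidden-configuration property (it does, since intersections can only shrink), that it alters the edge count by merely a $(1+o(1))$ factor (it does, since every point of $[n]$ retains all $(q')^{r-1}$ of its incidences while $n/n'\to1$), and that reconciling the construction's $(q')^r$ cliques with the prescribed $m$ costs only $o(\sqrt n\,m)$ edges. This is exactly the step that uses both the hypothesis $m=(1+o(1))n^{r/2}$ and the near-squareness guaranteed by the density of prime powers.
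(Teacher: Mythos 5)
Your proposal is correct and follows essentially the same route as the paper: Lemma~\ref{zarankiewicz} applied to the inversive-plane partition (with $k=q+1$) for $r=3$, and the asymptotic K\H{o}v\'ari--S\'os--Tur\'an convexity version of that lemma combined with the polynomial construction from Theorem~\ref{t3} for $r\ge 4$. The only divergence is in the technical handling of non-square $n$: the paper passes to a random induced subgraph on vertex sets of sizes $n$ and $m$ and takes the expected number of edges, whereas you restrict the point set deterministically (so every point keeps its exact degree $(q')^{r-1}$) and then pad or delete $o\bigl((q')^r\bigr)$ clique vertices, costing only $o(\sqrt{n}\,m)$ edges; both variants yield the same $(1+o(1))\sqrt{n}\,m$ lower bound.
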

\proof
In the first case we apply Lemma \ref{zarankiewicz} with $k=q+1$ and
$r=3$. Such a clique partition exists by the construction of inversive
planes.  Then we see that $m=qn={n \choose 3}/{q+1 \choose 3}$ and so
we immediately get $z(m,n,2,3)=km=(q+1)qn=(q^2+q)n$.

For the cases $r  \geq 4$ we use the asymptotic version of Lemma
\ref{zarankiewicz} above and the results in Section 3.2.
If $n=q^2$ with $q$ a prime power, the desired result follows from
the construction in the proof of Theorem \ref{t3}. For other values
of $n$ we take the smallest prime power $q$ so that $q^2 \geq n$,
consider the construction for that which is a bipartite graph with
vertex classes $A$ and $B$, and take the expected number of
edges in an induced subgraph on randomly chosen sets of sizes
$n$ and $m$ in $A$ and $B$, respectively.
\qed

\section{Concluding remarks}

$\bullet$ {\bf  Clique partitions and extensions of Fisher's
Inequality}

The de Bruijn-Erd\H{o}s Theorem is an extension of a
well known inequality of Fisher \cite{Fi} that asserts that any
nontrivial
clique partition of ${[n] \choose 2}$ in which  all cliques have the
same cardinality contains at least $n$ cliques. Fisher's Inequality
has been
extended in several ways. One such extension is due to
Ray-Chaudhuri and Wilson \cite{RW} who proved that for even $r$, any
clique partition of ${[n] \choose r}$ in which  all cliques have
the same cardinality contains at least ${n \choose {r/2}}$ cliques.
Our bound here is stronger (by a factor of $(1+o(1))(r/2)!$ for
even $r$), and applies to odd values of $r$ as
well without having to assume that all cliques have the same
cardinality. On the other hand, the result in \cite{RW}, whose
proof is algebraic, holds even
if every $r$-set is covered exactly $\lambda$ times for some
$\lambda>0$.

$\bullet$ {\bf  Partitions into complete $r$-partite $r$-graphs}

A well known result of Graham and Pollak \cite{GP} asserts
that the complete graph
on $n$ vertices can be edge partitioned into $n-1$, but not less,
complete bipartite graphs.  This can be viewed as a bipartite
analogue of the de Bruijn-Erd\H{o}s Theorem. In the bipartite case,
unlike the one dealing with clique-partitions, there are
many  extremal configurations, and the only known proofs of the
lower bound are algebraic (though recently Vishwanathan \cite{V} presented a counting argument that replaces the linear algebraic part of one of these proofs).

The $r$-partite
version of the problem considered here is to determine the minimum
possible number of complete $r$-partite $r$-graphs in a
decomposition of the edges of the complete $r$-graph on $n$
vertices. More formally, this is the minimum $p=p(n,r)$ so that there are
$p$ collections of the form $\{A^{(i)}_1, A^{(i)}_2, \ldots ,
A^{(i)}_r\}$,
$(1 \leq i \leq p)$, satisfying
$A^{(i)}_j \cap A^{(i)}_s =\emptyset$ for all $1 \leq i \leq p$ and all
$1 \leq j < s \leq r$, $A^{(i)}_j \subset [n]$ for all admissible
$i$ and $j$, and for every $r$-subset $R$ of $n$ there is a unique
$i$, $1 \leq i \leq p$ so that $|R \cap A^{(i)}_j|=1$ for all
$1 \leq j \leq r$. Thus, the Graham Pollak result asserts that
$p(n,2)=n-1$. In \cite{Al} it is proved that $p(n,3)=n-2$ and that
for every fixed $r$ there are two positive constants $c_1(r),c_2(r)$
so that $c_1(r)n^{\lfloor r/2 \rfloor}\le
p(n,r) \leq
c_2(r)n^{\lfloor r/2 \rfloor}$ for all $n$ (see also \cite{CKV} for slight improvements).  Note that for even $r$
the exponent of $n$  is the same as the one appearing in our bounds
for the clique partition problem (that is, the function $\cp(n,r)$
discussed here), but for odd values of $r$, and in particular for
$r=3$ where both functions are well understood, the exponents
differ.

$\bullet$ {\bf Geometric description of inversive planes.}

Inversive planes are more widely modeled using the techniques of finite projective geometry.  For completeness, we briefly describe the known models here since these planes provide optimal configurations.
Inversive planes can be constructed from ovoids. These are sometimes
called egg-like inversive planes (or Miquelian planes when the ovoid is an
elliptic quadric as we describe below). Thas~\cite{T} showed that for odd
$q$, if $P$ is an optimal clique partition of ${[n] \choose 3}$ and $n =
q^2 + 1$, and for some $v \in [n]$ the vertex sets $V(C) \backslash \{v\}
: C \in P$ constitute the point sets of the lines of the affine plane
$AG(2,q)$, then the vertex sets of the cliques in $P$ are the circles of
an egg-like inversive plane. It follows that the construction given in
Section 2.1 is equivalent to the known construction of inversive planes
when the field has odd characteristic $q$.

To describe the ovoidal construction, the setting is finite projective
3-space $PG(3,q)$.
An elliptic quadric of $PG(3,q)$ forms a set of $q^2+1$ points, no three of which
are collinear; in general, such a set of points in $PG(3,q)$ is called
an \emph{ovoid}.  Now consider the points
of an ovoid $\O$ of $PG(3,q)$ as the vertices of ${[q^2 + 1] \choose 3}$
together with the non-tangent planar cross sections of $\O$.  As three
points uniquely determine a plane, each set of three points of $\O$
(each edge of the hypergraph) is covered by one of these planar cross
sections. Moreover, it is straightforward to show that each non-tangent
plane meets $\O$ in $q+1$ points, forming a planar conic, an object we
described in Section \ref{subsec:ProofOf45}.  It follows that the
set of points of $\O$ together with the planar cross-sections of $\O$
form an inversive plane.

\bigskip

It is an important open question in projective geometry as to whether for any $q$, an inversive plane of order $q$ must be constructed from an ovoid as above. Barlotti~\cite{B} and
Dembowski~\cite{D} showed that if $q$ is odd, then any ovoid is projectively equivalent to an elliptic quadric -- the set of points satisfying a quadratic form $xy + \phi(w,z)$ (here $\phi$ is an irreducible quadratic form over $\F_q$). In this case, there is an alternative description: consider the ground set to be $\F_{q^2} \cup \{\infty\}$ and consider the images of the set $\F_q \cup \{\infty\}$ under the group of permutations
\[ PGL(2,q^2) = \Bigl\{z \mapsto \frac{a z^\alpha + b}{c z^\alpha + d} : a d - b c \neq 0, \alpha \in Aut(\F_{q^2})\Bigr\}.\]
The $q^3 + q$ images are taken to be the circles, each with $q + 1$ points, and it can be shown that the elliptic quadric construction is equivalent to the semi-linear fractional group construction.  Note that this model can also be viewed as the points of $PG(1,q^2)$ with blocks corresponding to the Baer sublines (i.e., copies of $PG(1,q)$) therein.

For $q$ even there exist ovoids that are not elliptic quadrics, the so-called Tits ovoids \cite{Tits}. However, Dembowski's Theorem~\cite{D} shows that if $q$ is even, then any inversive plane is derived from an ovoid as above, and so in this sense, optimal clique partitions of ${[n] \choose 3}$ when $n = q^2 + 1$ and $q = 2^h$ for $h \geq 2$ are unique up to the choice of the ovoid. It follows from the previously mentioned result of Thas~\cite{T}, translated into the language of clique partitions, that for odd $q$, if $P$ is an optimal clique partition of ${[n] \choose 3}$ and
$n = q^2 + 1$, and some $P_v^*$ is the classical affine plane $AG(2,q)$, then $P$ itself is constructed as above from an elliptic quadric. In particular,
the optimal clique partitions are unique. It appears difficult, however, to claim that any of the clique partitions $P_v^*$ are isomorphic
to the classical affine plane. Much more on finite projective 3-space, including results on ovoids and quadrics, can be found in the book by Hirschfeld \cite{JWPH2}.
\bigskip

$\bullet$ {\bf Classification of linear spaces.} Theorem \ref{lowerbound} gives a general lower bound on $\cp(n,r)$ in terms of $\cp(n-1,r-1)$. We concentrate on the case $r = 3$.
A necessary condition for tightness in Theorem \ref{lowerbound2} for
$r = 3$ when $n = q^2 + q + 2$ and $q$ is a positive integer is the
existence of a projective plane of order $q$ on $n - 1$ points. If $n$
is an integer such that no projective plane on $n - 1$ points exists, this
raises the question as to the minimum 2-designs which are not projective
planes and not near pencils on $n - 1$ points. A complete analysis of
2-designs on $v$ points with $b$ blocks such that $(b - v)^2 \leq v$
was carried out by Totten~\cite{T} (1976) (see also~\cite{Fo}). In
particular, those 2-designs are one of the following:
\begin{center}
\begin{tabular}{lp{5in}}
(a) &  near pencils \\
(b) & an affine plane of order $q$ with a linear space on at most $q + 1$ new points at infinity (add a point to every line in a parallel class, and then amongst the new points create a 2-design of lines -- this in particular contains the projective planes when $q + 1$ points are added and a single line through them is added) \\
(c) & embeddable in a projective plane (delete at most $v + 1$ points from a projective plane, deleting any line which becomes a singleton) \\
(d) & an exceptional configuration with $v = 6$ and $b = 8$.
\end{tabular}
\end{center}

We point out that Theorem \ref{lowerbound} can be tight. Suppose $n = 21$. By the classification of 2-designs, $|P_v^*|$ is at least the number of lines in an affine plane of order $q = 4$ together with the trivial linear space on $q$ points at infinity, so $\cp(n-1,2) \geq q^2 + q + 1$ i.e. $\cp(20,2) \geq 21$. Using this in Theorem \ref{lowerbound}, if $x = \cp(21,3)$ we obtain
\[ x \cdot {(21)^2/x \choose 3} \leq {21 \choose 3}\]
 which gives
\[ x \geq \frac{441}{676}(\sqrt{32361} - 63) > 76.\]
 Since $x$ is an integer, $x \geq 77$. However (see Theorem \ref{lowerbound2}) $\cp(22,3) = 77$ due to the existence of the Steiner $(22,6,3)$-system, so we conclude $x \leq \cp(22,3) = 77$.
 It follows that $\cp(21,3) = 77$. While it is possible to generalize the argument we just used for $n = 21$ by using the classification of 2-designs, it is the lack of constructive
upper bounds where more work is needed, and in general there is a gap between the upper and lower bounds for $\cp(n,3)$. An extensive survey of the existence problem
for Steiner systems may be found in~\cite{C}.

\end{document}